\newcommand{\al}{\alpha}
\newcommand{\be}{\beta}
\theoremstyle{plain}
\newtheorem{thm}{Theorem}
\newtheorem{lem}[thm]{Lemma}
\newtheorem{prop}[thm]{Proposition}
\theoremstyle{definition}
\newtheorem{defn}[thm]{Definition}
\newtheorem*{example*}{Example}
\newtheorem*{rem*}{Remark}
\newtheorem{rem}[thm]{Remark}
\newcommand{\R}{\mathbb{R}}
\DeclareMathOperator{\diam}{diam}
\DeclareMathOperator{\Sh}{\boldsymbol{\textbf{Sh}}}
\DeclareMathOperator{\SH}{\boldsymbol{\textbf{SH}}}
\DeclareMathOperator{\ucodim}{\underline{co\,dim}}
\DeclareMathOperator{\ocodim}{\overline{co\,dim}}
\DeclareMathOperator{\supp}{supp}
\DeclareMathOperator{\dist}{dist}
\DeclareMathOperator{\Int}{Int}
\DeclareSymbolFont{bbsymbol}{U}{bbold}{m}{n}
\DeclareMathSymbol{\ind}{\mathbin}{bbsymbol}{'061}
\title[Fractional Sobolev spaces with power weights]{Fractional Sobolev spaces with power weights}
\author[M{.} Kijaczko]{Micha\l{} Kijaczko}
\keywords{fractional Sobolev spaces, smooth functions, compact support, density, Assouad codimension, Assouad dimension, fractional Hardy inequality, weight}
\subjclass[2010]{Primary 46E35; Secondary 35A15}
\address[ M.K.]{Faculty of Pure and Applied Mathematics\\ Wroc{\l}aw University 
	of Science and Technology\\
	Wybrze\.ze Wyspia\'nskiego 27,
	50-370 Wroc{\l}aw, Poland
}
\email{michal.kijaczko@pwr.edu.pl}
\begin{document}

\begin{abstract}
We investigate the form of the closure of the smooth, compactly supported functions $C_{c}^{\infty}(\Omega)$ in the weighted fractional Sobolev space $W^{s,p;\,w,v}(\Omega)$ for bounded $\Omega$. We focus on the weights $w,\,v$ being powers of the distance to the boundary of the domain. Our results depend on the lower and upper Assouad codimension of the boundary of $\Omega$. For such weights we also prove the comparability between the full weighted fractional Gagliardo seminorm and the truncated one.
\end{abstract}

	\maketitle
	
	\section{Introduction and preliminaries}
		Let $\Omega\subset\mathbb{R}^{d}$ be an open set. Let $0<s<1$ and $1\leq p<\infty$. We recall that the \emph{fractional Sobolev space} is defined as 
	$$
	W^{s,p}(\Omega)=\left\{f\in L^{p}(\Omega):\int_{\Omega}\int_{\Omega}\frac{|f(x)-f(y)|^{p}}{|x-y|^{d+sp}}\,dy\,dx<\infty\right\}.
	$$
	This is a Banach space endowed with the norm 
	$$
	\|f\|_{W^{s,p}(\Omega)}=\|f\|_{L^{p}(\Omega)}+[f]_{W^{s,p}(\Omega)},
	$$
	where $[f]_{W^{s,p}(\Omega)}=\left(\int_{\Omega}\int_{\Omega}\frac{|f(x)-f(y)|^{p}}{|x-y|^{d+sp}}\,dy\,dx\right)^{1/p}$ is called the \emph{Gagliardo seminorm}.
	
	In this paper we consider weighted fractional Sobolev spaces. For \emph{weights} $w,v$ (i.e. measurable nonnegative functions on $\Omega$) we define the \emph{weighted Gagliardo seminorm} as
	$$
	[f]_{W^{s,p;\,w,v}(\Omega)}=\left(\int_{\Omega}\int_{\Omega}\frac{|f(x)-f(y)|^{p}}{|x-y|^{d+sp}}w(y)v(x)\,dy\,dx\right)^{\frac{1}{p}}
	$$
	and the \emph{weighted fractional Sobolev space} as 
	$$
	W^{s,p;\,w,v}(\Omega)=\left\{f\in L^{p}(\Omega):[f]_{W^{s,p;\,w,v}(\Omega)}<\infty\right\}.
	$$
	For bounded $\Omega$ the space defined above is always nonempty, because it contains constant functions. Moreover, if $w_{\alpha}(x)=\dist(x,\partial\Omega)^{-\alpha}$ and $v_{\beta}(y)=\dist(y,\partial\Omega)^{-\beta}$ for $\alpha,\beta\in\R$, we denote
	$$
	{W^{s,p;\,w_{\alpha},v_{\beta}}(\Omega)}=:{W^{s,p;\,\alpha,\beta}(\Omega)}.
	$$
	The space $W^{s,p;\,w,v}(\Omega)$ is equipped with the natural norm
	$$
	\|f\|_{W^{s,p;\,w,v}(\Omega)}=\|f\|_{L^{p}(\Omega)}+[f]_{W^{s,p;\,w,v}(\Omega)}.
	$$
	
	We remark here that all results of the paper remain true if we replace the space $L^{p}(\Omega)$ appearing in the definition of $W^{s,p;\,w,v}(\Omega)$ by the weighted analogue $L^{p}(\Omega,W)$ for any almost everywhere positive weight $W$, which is locally comparable to a constant (see Definition \ref{loc}) or continuous and satisfies $\int_{\Omega}W(x)\,dx<\infty$. Notice that the last condition ensures that the constant function $\ind_{\Omega}$ is in $L^{p}(\Omega,W)$. However, for simplicity we consider only the unweighted case.
	
	For an open set $\Omega$ we use the notation $d_{\Omega}(x)=\dist(x,\partial\Omega)$.
	
	\begin{defn}
	    By $W^{s,p;\,w,v}_{0}(\Omega)$ we denote the closure of $C_{c}^{\infty}(\Omega)\cap W^{s,p;\,w,v}(\Omega)$ (smooth functions with compact support in $\Omega$) in $W^{s,p;\,w,v}(\Omega)$ with respect to the weighted fractional Sobolev norm and by $W^{s,p;\,w,v}_{c}(\Omega)$ we denote the closure of all compactly supported, measurable functions in $\Omega$ (not necessarily smooth) in $W^{s,p;\,w,v}(\Omega)$ with respect to the weighted fractional Sobolev norm. We also denote $	W^{s,p;\,w_{\alpha},v_{\beta}}_{0}(\Omega)=:W^{s,p;\,\alpha,\beta}_{0}(\Omega),$ $	W^{s,p;\,w_{\alpha},v_{\beta}}_{c}(\Omega)=:W^{s,p;\,\alpha,\beta}_{c}(\Omega).$
	
	\end{defn}

We refer to Section \ref{section3} for a discussion on the cases when $C_c^{\infty}(\Omega)$ is or is not a subset of $W^{s,p;\,\alpha,\beta}(\Omega)$. In general, it may occure that the space $W^{s,p;\,\al,\be}_{0}(\Omega)$ is empty.
	   
	The main result of this paper is a generalization of the density result for unweighted fractional Sobolev spaces, which can be found in \cite[Theorem 2]{DK}. We present some necessary and sufficient conditions for the space $C_{c}^{\infty}(\Omega)$ to be dense in $W^{s,p;\,\alpha,\beta}(\Omega)$. In the negative case, under some additional assumptions we also find explicitly the form of the space $W_{0}^{s,p;\,\alpha,\beta}(\Omega)$. The necessary geometrical and technical definitions are contained in Section \ref{section2}. In Section \ref{section3} we present Lemmas, most of them being generalization of these from \cite{DK} and \cite{MR4190640} for the weighted case.

Let us remark that the weighted fractional Sobolev spaces related to the weighted Sobolev-type norm $[\,\cdot\,]_{W^{s,p;\,\alpha,\beta}(\Omega)}+\|\cdot\|_{L^{p^{*}}(\Omega,W)}$ and the problem of density of $C_{c}^{\infty}(\Omega)$ were investigated before by Dipierro and Valdinoci in \cite{MR3420496} for the case $\Omega=\R^d\setminus\{0\}$, $\alpha=\beta\in[0,(d-sp)/2)$, $p^{*}=dp/(d-sp)$ and $W(x)=|x|^{-\frac{2\alpha d}{d-sp}}$. However, this problem is not directly comparable to ours, because we consider only bounded sets $\Omega$. Similar weighted fractional Sobolev spaces were an object of study in \cite{MR3626031} in connection with weighted Caffarelli--Kohn--Nirenberg and fractional Hardy inequalities. Moreover, related results for unweighted Sobolev-type spaces can be found for example in \cite{MR3310082}, where the authors considered spaces of functions vanishing on $\R^d\setminus\Omega$ and in \cite{MR3989177}, where the problem of density of $C_c^{\infty}(\Omega)$ functions was investigated in the context of the fractional Sobolev spaces with variable exponents.
	
	Section \ref{section4} is devoted to the comparability result between the full weighted seminorm and the truncated one in the space $W^{s,p;\,w,v}(\Omega)$. This comparability is important to us in proving our main results (to be more specific - in Lemma \ref{inlcusion} and Lemma \ref{ineq}). However, it is also very interesting and nontrivial property itself. Similar results were obtained before by Dyda \cite{MR2215170} for Gagliardo-type seminorms with the additional homogeneous kernels (like indicators of cones), by Prats and Saksman \cite{MR3667439} in a more general context of Triebel-Lizorkin spaces and generalized later by Rutkowski \cite{MR4249694} for the kernels of the form 
	$|x-y|^{-d}\varphi(|x-y|)^{-q}$, with $\varphi$ satisfying certain technical assumptions. Some versions of the reduction of the integration theorems can also be found in \cite{MR3960670}, \cite{MR4111815} and \cite{MW}. We want to point here that a variant of comparability is nonexplicitly contained in the early work of Seeger \cite{MR1097208}. We prove a weighted analogue of the reduction of the integration theorem for the space $W^{s,p;\,\alpha,\beta}(\Omega)$, provided that $0\leq\alpha,\beta<\ucodim_{A}(\partial\Omega)$. This result is stated below.
	 \begin{thm}\label{tw.comp}
    Let $\Omega$ be a nonempty, bounded, uniform domain, let $0<s<1$ and $1\leq p<\infty$. Moreover, let $0<\theta\leq 1$. Suppose that $0\leq\al,\be<\ucodim_A(\partial(\Omega)$. Then the full seminorm $[f]_{W^{s,p;\,\al, \,\be}(\Omega)}$ and the truncated seminorm $$
    \left(\int_{\Omega}\int_{B\left(x,\theta d_{\Omega}(x)\right)}\frac{|f(x)-f(y)|^{p}}{|x-y|^{d+sp}}d_\Omega(y)^{-\be}d_\Omega(x)^{-\al}\,dy\,dx\right)^{\frac{1}{p}}
    $$
    are comparable, that is there exists a constant $C=C(\theta,d,s,p,\al,\be,\Omega)>0$ such that 
    $$
    \int_{\Omega}\int_{\Omega}\frac{|f(x)-f(y)|^{p}}{|x-y|^{d+sp}}\frac{dy}{d_\Omega(y)^{\be}}\frac{dx}{d_\Omega(x)^{\al}}\leq C\int_{\Omega}\int_{B\left(x,\theta d_{\Omega}(x)\right)}\frac{|f(x)-f(y)|^{p}}{|x-y|^{d+sp}}\frac{dy}{d_\Omega(y)^{\be}}\frac{dx}{d_\Omega(x)^{\al}}\,dy\,dx,
    $$
    for all $f\in L^{1}_{loc}(\Omega)$.
       \end{thm}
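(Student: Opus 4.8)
The plan is to prove the nontrivial inequality --- the full seminorm bounded by the truncated one --- the reverse bound being trivial since $B(x,\theta d_\Omega(x))\subseteq\Omega$ and the integrand is nonnegative. We may assume $f\in L^{1}_{loc}(\Omega)$ has finite truncated seminorm, and the local Poincar\'e estimate below will then force $f\in L^{p}_{loc}(\Omega)$. Fix a Whitney decomposition $\W=\{Q\}$ of $\Omega$, refined by subdividing each Whitney cube into a bounded, $\theta$-dependent number of congruent subcubes so that whenever $x,y$ lie in one Whitney cube or in two neighbouring ones, $|x-y|<\theta\,d_\Omega(x)$. Write $Q_x$ for the cube containing $x$ and $f_Q=|Q|^{-1}\int_Q f$; since $d_\Omega\approx\ell(Q)$ on $Q$, the weights $d_\Omega^{-\al}$, $d_\Omega^{-\be}$ are comparable to cube-dependent constants, which lets me pull them out of inner integrals. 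I shall use repeatedly the Poincar\'e-type estimate $\int_Q|f-f_Q|^p\lesssim\ell(Q)^{sp}\int_Q\int_Q|z-z'|^{-d-sp}|f(z)-f(z')|^p\,dz'\,dz$ (Jensen together with $|z-z'|\le\diam Q$), its neighbouring-cube analogue $|f_Q-f_{Q'}|^p\lesssim\ell(Q)^{sp-d}\int_Q\int_{Q'}|z-z'|^{-d-sp}|f(z)-f(z')|^p\,dz'\,dz$, and the Aikawa-type bound recorded in Section~\ref{section2}: since $\al,\be<\ucodim_A(\partial\Omega)$, there is $C$ with $\int_{B(w,R)\cap\Omega}d_\Omega(z)^{-\gamma}\,dz\le C\,R^{d-\gamma}$ for $\gamma\in\{\al,\be\}$ and all $w\in\partial\Omega$, $R>0$.

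Next I split $\Omega\times\Omega$ into the near set $\mathcal N=\{(x,y):y\in B(x,\theta d_\Omega(x))\}$, whose contribution is exactly the truncated seminorm, and the far set $\mathcal M$. On $\mathcal M$, choosing a Whitney chain $Q_x=R^0,R^1,\dots,R^N=Q_y$, I use
\[
|f(x)-f(y)|^p\lesssim|f(x)-f_{Q_x}|^p+N^{p-1}\sum_{i=0}^{N-1}|f_{R^i}-f_{R^{i+1}}|^p+|f_{Q_y}-f(y)|^p
\]
and estimate the three contributions to $\iint_{\mathcal M}|x-y|^{-d-sp}|f(x)-f(y)|^p d_\Omega(y)^{-\be}d_\Omega(x)^{-\al}\,dy\,dx$ separately. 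For the first endpoint term I integrate out $y$ over $\{y:|x-y|\ge\theta d_\Omega(x)\}$: decomposing into dyadic annuli $\{2^k\theta d_\Omega(x)\le|x-y|<2^{k+1}\theta d_\Omega(x)\}$ and applying the Aikawa bound on each, the geometric series (convergent because $sp+\be>0$) gives $\int_{\{y:|x-y|\ge\theta d_\Omega(x)\}}|x-y|^{-d-sp}d_\Omega(y)^{-\be}\,dy\lesssim d_\Omega(x)^{-sp-\be}$. Hence this term is $\lesssim\sum_Q\ell(Q)^{-sp-\al-\be}\int_Q|f-f_Q|^p\lesssim\sum_Q\ell(Q)^{-\al-\be}\int_Q\int_Q|x-z|^{-d-sp}|f(x)-f(z)|^p\,dz\,dx$ by the local Poincar\'e estimate; restoring the weights (constant $\approx\ell(Q)^{-\al-\be}$ on $Q\times Q$), using $z\in B(x,\theta d_\Omega(x))$ and the disjointness of the $Q$, this is at most the truncated seminorm. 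The third term is symmetric: from $|x-y|\ge\theta d_\Omega(x)$ and the fact that $d_\Omega$ is $1$-Lipschitz one gets $|x-y|\ge\frac{\theta}{1+\theta}d_\Omega(y)$, so the same annular argument run in the $x$-variable gives $\int_{\{x:|x-y|\ge\theta d_\Omega(x)\}}|x-y|^{-d-sp}d_\Omega(x)^{-\al}\,dx\lesssim d_\Omega(y)^{-sp-\al}$, and the bound follows with the roles of $\al,\be$ interchanged.

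The chain term is where I expect the real work. I reorganize the integral over $\mathcal M$ by grouping the far pairs $(x,y)$ according to which ordered pair of neighbouring cubes $(R,R')$ occurs as a link $R^i\to R^{i+1}$ in the chain, obtaining a bound $\sum_{R\sim R'}|f_R-f_{R'}|^p\,\omega(R,R')$, where $\omega(R,R')$ is the total $|x-y|^{-d-sp}d_\Omega(x)^{-\al}d_\Omega(y)^{-\be}$-mass (with the harmless extra factor $N^{p-1}$) of all such pairs. Here the uniformity of $\Omega$ is essential: the cigar/chain condition controls both the length $N$ of the chain, which grows only logarithmically, and --- crucially --- how fast the cubes along the chain through a given link can shrink, confining the pairs that contribute to $\omega(R,R')$ to a union of boundary balls of controlled radii; combining this with the Aikawa bounds for $d_\Omega^{-\al}$ and $d_\Omega^{-\be}$ applied to the Whitney cubes at each scale, the outcome should be $\omega(R,R')\lesssim\ell(R)^{d-sp-\al-\be}$ uniformly in the link. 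The neighbouring-cube Poincar\'e estimate then turns $\sum_{R\sim R'}\ell(R)^{d-sp-\al-\be}|f_R-f_{R'}|^p$ into $\lesssim\sum_{R\sim R'}\ell(R)^{-\al-\be}\int_R\int_{R'}|x-y|^{-d-sp}|f(x)-f(y)|^p\,dy\,dx$, which --- restoring the weights and using the refinement together with the bounded number of neighbours per cube --- is once more at most a constant times the truncated seminorm. Summing the near contribution and the three far contributions yields the theorem, the constant depending only on $\theta,d,s,p,\al,\be$ and the uniformity and codimension constants of $\Omega$.
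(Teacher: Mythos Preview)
Your overall strategy is sound and can be completed, but it differs from the paper's route and leaves a genuine gap at the chain term. The paper, following Prats--Saksman, linearizes via $L^p$--$L^q$ duality for $p>1$: the seminorm is written as $\sup_g\iint(\ldots)g$ with $\|g\|_{L^q}\le1$, the telescoping along the admissible chain is applied to $|f(x)-f(y)|$ (first power) so no $N^{p-1}$ factor ever appears, and the chain term is then closed using the $A_1$ bound $Md_\Omega^{-\gamma}\lesssim d_\Omega^{-\gamma}$ together with the $L^{q/(q-\varepsilon)}$-boundedness of the maximal operator acting on the test function. (Incidentally, what Section~\ref{muckenhoupt} actually records is this $A_1$/maximal-function inequality \eqref{muck2}, not the boundary-centred Aikawa form you quote; the two are equivalent for $d_\Omega^{-\gamma}$, but the $A_1$ form $\int_{B(x,R)}d_\Omega^{-\gamma}\lesssim R^d\,d_\Omega(x)^{-\gamma}$ for arbitrary $x\in\Omega$ is what your annular estimates really use.) Your direct approach treats $p=1$ and $p>1$ uniformly and avoids duality and maximal functions, at the price of carrying the $N^{p-1}$ factor.

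The gap is the bound $\omega(R,R')\lesssim\ell(R)^{d-sp-\alpha-\beta}$, which you say ``should'' hold and in which you call $N^{p-1}$ ``harmless'' without justification. It is absorbable, but not for free. For a link in the ascending half $[Q,Q_S)$ one has $Q\in\Sh(R)$ and $D(Q,S)\approx D(R,S)$, so the ascending contribution is dominated by $\sum_{Q\in\Sh(R)}\ell(Q)^{d-\alpha}\sum_S\ell(S)^{d-\beta}D(R,S)^{-d-sp}N^{p-1}$. Writing $N\lesssim\log(\ell(R)/\ell(Q))+\log(D/\ell(R))+\log(D/\ell(S))$ and layering: the $Q$-sum carrying $(\log(\ell(R)/\ell(Q)))^{p-1}$, grouped by $\ell(Q)\approx2^{-m}\ell(R)$, needs the Assouad-codimension bound $\big|\widetilde{\partial\Omega}_{2^{-m}\ell(R)}\cap B(x_R,\rho\ell(R))\big|\lesssim\ell(R)^d2^{-m\underline c}$ for some $\underline c\in(\alpha,\ucodim_A(\partial\Omega))$, yielding the convergent series $\sum_m m^{p-1}2^{-m(\underline c-\alpha)}$; the $S$-sum carrying $(\log(D/\ell(S)))^{p-1}$ needs the analogous layering in $\ell(S)$ with some $\underline c>\beta$; and the remaining factor $(\log(D/\ell(R)))^{p-1}$ is absorbed by $\sum_j j^{p-1}2^{-jsp}$. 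This is exactly where the hypothesis $\alpha,\beta<\ucodim_A(\partial\Omega)$ enters your argument, and until you carry out these sums the proof is incomplete. The paper's duality route sidesteps this computation entirely.
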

     
It is clear that the reverse inequality is trivial with constant equal to one, hence we indeed obtain the comparability between the full and the truncated weighted Gagliardo seminorms. Moreover, when $p=1$, the comparability can be formulated in a more general setting, for all $A_1$ class Muckenhoupt weights, see Theorem \ref{tw.comp2}.

Section \ref{sectionmainresults} contains proofs of our main results, Theorems \ref{tw1} and \ref{tw2}. Theorem \ref{tw1} is a generalization of \cite[Theorem 2]{DK} and Theorem \ref{tw2} is a generalization of \cite[Theorem 3]{DK}, provided that $\Omega$ is a uniform domain. 

\begin{thm}\label{tw1} 
Let $\Omega\subset \R^d$ be a nonempty, bounded, open set, let $0<s<1$, $1\leq p <\infty$ and $\al,\be\geq 0$.\\
(I) If  $sp+\alpha+\beta<d-\overline{\dim}_M(\partial\Omega)$,
then $W_{0}^{s,p;\,\alpha,\beta}(\Omega)=W^{s,p;\,\alpha,\beta}(\Omega)$.\\
(II)  If $\Omega$ is $(d-sp-\alpha-\beta)$-homogeneous, $p>1$ and
$sp+\alpha+\beta=\ucodim_{A}(\partial\Omega),$
then $W_{0}^{s,p;\,\alpha,\beta}(\Omega)=W^{s,p;\,\alpha,\beta}(\Omega)$.\\
(III) If $\Omega$ is $\kappa$-plump and $sp+\alpha+\beta>\ocodim_{A}(\partial\Omega)$,
then $W_{0}^{s,p;\,\alpha,\beta}(\Omega)\neq W^{s,p;\,\alpha,\beta}(\Omega)$.\\
\end{thm}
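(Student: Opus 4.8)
The plan is to prove the three parts of Theorem~\ref{tw1} separately, using the lemmas of Section~\ref{section3} together with the comparability Theorem~\ref{tw.comp}, and reducing wherever possible to the unweighted arguments of \cite{DK}.

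For part (I), the goal is to show that every $f\in W^{s,p;\,\alpha,\beta}(\Omega)$ can be approximated in the weighted norm by functions in $C_c^{\infty}(\Omega)$. The natural strategy is a two-step truncation-and-mollification scheme. First I would cut off a thin neighborhood of $\partial\Omega$: multiply $f$ by a Lipschitz cutoff $\varphi_{\varepsilon}$ equal to $1$ on $\{d_{\Omega}>2\varepsilon\}$ and vanishing on $\{d_{\Omega}<\varepsilon\}$, so that $\varphi_\varepsilon f$ is compactly supported. The crucial point is to estimate the weighted Gagliardo seminorm of $f-\varphi_\varepsilon f$ and show it tends to $0$ as $\varepsilon\to 0$. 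Here the strict inequality $sp+\alpha+\beta<d-\overline{\dim}_M(\partial\Omega)$ is what makes the relevant integrals over the collar $\{d_\Omega<2\varepsilon\}$ small: one covers the collar by roughly $\varepsilon^{-(\overline{\dim}_M(\partial\Omega))}$ balls of radius $\varepsilon$, and the weight factors $d_\Omega^{-\alpha}$, $d_\Omega^{-\beta}$ together with the kernel singularity contribute powers of $\varepsilon$ that are, in total, positive. This is exactly the kind of estimate packaged in the Section~\ref{section3} lemmas (the cutoff lemma and the inclusion/inequality lemmas, Lemma~\ref{inlcusion} and Lemma~\ref{ineq}), so I would invoke those rather than redo the covering argument. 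Once $\varphi_\varepsilon f$ is compactly supported in $\Omega$ and lies in $W^{s,p;\,\alpha,\beta}(\Omega)$, on its support the weights $d_\Omega^{-\alpha}, d_\Omega^{-\beta}$ are bounded above and below by constants, so the weighted and unweighted seminorms are comparable there, and a standard mollification finishes the approximation. Thus $W^{s,p;\,\alpha,\beta}_0(\Omega)=W^{s,p;\,\alpha,\beta}(\Omega)$.

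For part (II), the borderline case $sp+\alpha+\beta=\ucodim_A(\partial\Omega)$ with $p>1$, the collar estimate above is no longer summable in a single scale, so one uses a logarithmic (dyadic) refinement of the cutoff: choose cutoffs supported between scales $2^{-k}$ and $2^{-k+1}$ and sum the contributions using the $(d-sp-\alpha-\beta)$-homogeneity hypothesis together with a Hardy-type inequality (which is where $p>1$ enters, via the boundedness of the relevant discrete Hardy operator on $\ell^p$). Concretely, I would use the fractional Hardy inequality available under the $\ucodim_A$ hypothesis to control $\int_\Omega |f|^p d_\Omega^{-sp-\alpha-\beta}\,dx$ by the weighted seminorm, then run the dyadic cutoff, showing the error terms telescope and vanish because the Hardy term is finite. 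Again the homogeneity assumption and the codimension bound are precisely the inputs needed to make the dyadic sum converge, and the relevant technical estimates should already be isolated as lemmas in Section~\ref{section3}; I would assemble them rather than prove them from scratch.

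For part (III), the negative direction, the strategy is to exhibit an explicit $f\in W^{s,p;\,\alpha,\beta}(\Omega)$ that cannot be approximated by compactly supported functions, e.g.\ $f\equiv 1$ (or a suitable bounded function bounded away from $0$ near $\partial\Omega$). The key is a lower bound: when $sp+\alpha+\beta>\ocodim_A(\partial\Omega)$ and $\Omega$ is $\kappa$-plump, there is a \emph{reverse} fractional Hardy inequality, or more precisely a trace-type obstruction, showing that for any $g$ of compact support $[1-g]_{W^{s,p;\,\alpha,\beta}(\Omega)}+\|1-g\|_{L^p(\Omega)}$ is bounded below by a positive constant. The mechanism: plumpness gives, near each boundary point, interior balls of comparable size at every scale, and the condition $sp+\alpha+\beta>\ocodim_A(\partial\Omega)$ forces the double integral $\int\int |1-g(x)|^p|x-y|^{-d-sp} d_\Omega(y)^{-\beta}d_\Omega(x)^{-\alpha}$ near $\partial\Omega$ to blow up unless $g$ is close to $1$ on a set of positive measure near the boundary — contradicting compact support. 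I expect this part to be the main obstacle: one must quantify the boundary behavior carefully, and make the plumpness hypothesis do exactly the work of ensuring enough "interior mass" at all scales; the comparability Theorem~\ref{tw.comp} (valid since $\alpha,\beta<\ucodim_A(\partial\Omega)\le \ocodim_A(\partial\Omega)$, hence in the regime we need) lets us work with the truncated seminorm, where the balls $B(x,\theta d_\Omega(x))$ interact cleanly with the plumpness balls. I would model the argument on \cite[Theorem 3]{DK}, inserting the weights and tracking how they shift the threshold from $d-\overline{\dim}_M$ to $\ocodim_A$, and using the Section~\ref{section3} lemmas for the weighted Hardy-type lower bounds.
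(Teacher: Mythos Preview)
Your plan for parts (I) and (III) is in the right spirit, but part (II) has a genuine gap, and there are smaller issues in (I) and (III) that are worth pointing out.

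\medskip

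\textbf{Part (II).} This is where your proposal really diverges from what works. You want to invoke ``the fractional Hardy inequality available under the $\ucodim_A$ hypothesis'' to control $\int_\Omega |f|^p d_\Omega^{-sp-\alpha-\beta}\,dx$, but look at Theorem~\ref{Hardy}: case (T') requires the \emph{strict} inequality $sp+\alpha+\beta < \ucodim_A(\partial\Omega)$, and case (F) requires $sp+\alpha+\beta > \ocodim_A(\partial\Omega)$. At the borderline $sp+\alpha+\beta = \ucodim_A(\partial\Omega)$ no Hardy inequality is available, so your dyadic-cutoff-plus-Hardy scheme cannot get off the ground. The paper's argument is entirely different: using Lemma~\ref{ind} one reduces to showing $\ind_\Omega \in W^{s,p;\alpha,\beta}_0(\Omega)$; the homogeneity hypothesis gives only that the sequence $\{v_n\}$ from Lemma~\ref{ineq} is \emph{bounded} (not convergent) in $W^{s,p;\alpha,\beta}(\Omega)$; one then uses the reflexivity of the space (Proposition~\ref{refl}) together with Banach--Alaoglu and Mazur's lemma to extract convex combinations converging strongly to $\ind_\Omega$. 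Reflexivity is the actual reason $p>1$ is needed, not any discrete $\ell^p$ Hardy operator.

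\medskip

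\textbf{Part (I).} Your two-step truncate-then-mollify approach for a general $f$ runs into the problem that Lemma~\ref{ineq} only gives you what you need in two forms: \eqref{eq:fvn} for general $f$ but under the extra hypothesis that $\Omega$ is uniform (not assumed in case (I)), and \eqref{eq:fvn2} only for $f\in L^\infty(\Omega)$. The paper sidesteps this by first reducing, via Lemma~\ref{ind}, to the single function $f=\ind_\Omega$, which is bounded, so \eqref{eq:fvn2} applies and the collar estimate follows immediately from $\zeta_\Omega(sp+\alpha+\beta)<\infty$. You never mention Lemma~\ref{ind}; without it you would need an extra $L^\infty$-truncation step to make your argument complete.

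\medskip

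\textbf{Part (III).} You have the right endgame (Hardy plus Fatou applied to $\ind_\Omega$), but you invoke the wrong auxiliary result. Theorem~\ref{tw.comp} requires $\Omega$ uniform and $0\le\alpha,\beta<\ucodim_A(\partial\Omega)$; case (III) assumes neither. Fortunately Theorem~\ref{tw.comp} is not needed: Theorem~\ref{Hardy} in case (F) already has the truncated integral on the right-hand side, holds for merely $\kappa$-plump domains and all $\alpha,\beta\ge 0$, and yields the obstruction directly. Also, the model to follow is \cite[Theorem~2, case III]{DK}, not \cite[Theorem~3]{DK}.
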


\begin{thm}\label{tw2}
Let $\Omega\subset \R^d$ be a nonempty, bounded, uniform and open set, let $0<s<1$, $1\leq p <\infty$ and $0\leq \alpha,\beta<\ucodim_{A}(\partial\Omega)$. If $sp+\alpha+\beta>\ocodim_{A}(\partial\Omega)$, then
$$
W_{0}^{s,p;\,\alpha,\beta}(\Omega)=\left\{f\in W^{s,p;\,\alpha,\beta}(\Omega):\int_{\Omega}\frac{|f(x)|^{p}}{d_{\Omega}(x)^{sp+\alpha+\beta}}\,dx<\infty\right\}.
$$
\end{thm}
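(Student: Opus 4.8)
The plan is to prove the two inclusions separately, denoting by $X$ the space on the right-hand side, namely those $f\in W^{s,p;\,\al,\be}(\Omega)$ with $\int_\Omega |f(x)|^p d_\Omega(x)^{-(sp+\al+\be)}\,dx<\infty$. The inclusion $W_0^{s,p;\,\al,\be}(\Omega)\subseteq X$ should follow from a weighted fractional Hardy inequality: under the hypothesis $sp+\al+\be>\ocodim_A(\partial\Omega)$ and $0\le\al,\be<\ucodim_A(\partial\Omega)$, one expects an estimate of the form
\[
\int_\Omega \frac{|g(x)|^p}{d_\Omega(x)^{sp+\al+\be}}\,dx \le C\,[g]_{W^{s,p;\,\al,\be}(\Omega)}^p
\]
for all $g\in C_c^\infty(\Omega)$ (this is presumably one of the lemmas of Section~3, proved using the Assouad-codimension machinery and the comparability Theorem~\ref{tw.comp}). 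Given this, if $f\in W_0^{s,p;\,\al,\be}(\Omega)$ and $f_n\to f$ in norm with $f_n\in C_c^\infty(\Omega)$, then $(f_n)$ is Cauchy in the seminorm, hence Cauchy in $L^p(\Omega, d_\Omega^{-(sp+\al+\be)}\,dx)$ by Hardy; passing to a subsequence converging a.e. and using Fatou, $f\in X$ and the extra integral is finite. So $W_0^{s,p;\,\al,\be}(\Omega)\subseteq X$, and moreover $X$ is seen to be a Banach space with the norm $\|f\|_{W^{s,p;\,\al,\be}(\Omega)}+\big(\int_\Omega |f|^p d_\Omega^{-(sp+\al+\be)}\big)^{1/p}$, which is equivalent to the original norm on $X$.

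For the reverse inclusion $X\subseteq W_0^{s,p;\,\al,\be}(\Omega)$, the strategy is a truncation-and-mollification argument in the spirit of \cite[Theorem 3]{DK}. First I would reduce to functions that are compactly supported away from $\partial\Omega$: given $f\in X$, set $f_\epsilon = f\cdot\chi_\epsilon$ where $\chi_\epsilon$ is a Lipschitz cutoff equal to $1$ on $\{d_\Omega>2\epsilon\}$ and $0$ on $\{d_\Omega<\epsilon\}$, with $|\nabla\chi_\epsilon|\lesssim 1/\epsilon$. One must show $f_\epsilon\to f$ in $W^{s,p;\,\al,\be}(\Omega)$. The $L^p$ convergence is clear by dominated convergence. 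For the seminorm, split the double integral; the delicate term is
\[
\int_{\{d_\Omega(x)<2\epsilon\}}\int_{\Omega} \frac{|f(x)(1-\chi_\epsilon(x)) - f(y)(1-\chi_\epsilon(y))|^p}{|x-y|^{d+sp}}\,d_\Omega(y)^{-\be}d_\Omega(x)^{-\al}\,dy\,dx,
\]
and here the truncated-seminorm comparability (Theorem~\ref{tw.comp}, valid since $\Omega$ is uniform and $\al,\be<\ucodim_A(\partial\Omega)$) lets me restrict $y$ to $B(x,\theta d_\Omega(x))$, so that $d_\Omega(y)\simeq d_\Omega(x)$ on the region of integration; then the Lipschitz bound on $\chi_\epsilon$ together with the finiteness of $\int_\Omega |f|^p d_\Omega^{-(sp+\al+\be)}$ controls this term and forces it to $0$ as $\epsilon\to 0$ — this is essentially the same computation as in the unweighted case but with the weights absorbed because $d_\Omega(x)\simeq d_\Omega(y)$ locally. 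Once $f$ is supported in $\{d_\Omega\ge\epsilon\}$, a standard mollification yields $C_c^\infty(\Omega)$ approximants converging in $W^{s,p;\,\al,\be}(\Omega)$ (the weights are bounded above and below on the support, so this reduces to the classical unweighted density of $C_c^\infty$ in $W^{s,p}$ on a fixed compact set).

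The main obstacle is the cutoff step: verifying that the cross terms in the Gagliardo seminorm involving the transition region $\{\epsilon<d_\Omega<2\epsilon\}$ vanish in the limit, which is exactly where the hypothesis $sp+\al+\be>\ocodim_A(\partial\Omega)$ and the condition $f\in X$ must be used in tandem — the former controls the weight integrated against a dyadic sum near the boundary, and the latter supplies the finite quantity that the error is bounded by. The role of the comparability theorem is crucial here: without reducing to the diagonal region $|x-y|\lesssim d_\Omega(x)$, the weights $d_\Omega(x)^{-\al}$ and $d_\Omega(y)^{-\be}$ are not comparable and the estimate becomes genuinely harder; with it, the argument parallels \cite{DK}. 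I would also record at the outset that $C_c^\infty(\Omega)\subseteq W^{s,p;\,\al,\be}(\Omega)$ holds under $\al,\be<\ucodim_A(\partial\Omega)$ (discussed in Section~3), so that $W_0^{s,p;\,\al,\be}(\Omega)$ is the closure of a genuinely nontrivial set and the statement is not vacuous.
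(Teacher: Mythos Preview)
Your proposal is correct and follows essentially the same route as the paper: the inclusion $W_0\subseteq X$ via the weighted fractional Hardy inequality (Theorem~\ref{Hardy}, case (F)) plus Fatou, and $X\subseteq W_0$ via a boundary cutoff whose seminorm is controlled through the comparability Theorem~\ref{tw.comp} (the paper packages this estimate as Lemma~\ref{ineq} with the explicit cutoffs $v_n$), followed by mollification of compactly supported functions (Theorem~\ref{gen}, giving $W_0=W_c$). One small correction to your commentary: the hypothesis $sp+\al+\be>\ocodim_A(\partial\Omega)$ is \emph{not} used in the cutoff step for $X\subseteq W_0$---there only $0\le\al,\be<\ucodim_A(\partial\Omega)$ (to invoke comparability) and the finiteness of $\int_\Omega|f|^p d_\Omega^{-(sp+\al+\be)}\,dx$ are needed; the upper-Assouad-codimension condition enters solely through the Hardy inequality in the direction $W_0\subseteq X$.
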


Theorem \ref{tw2} reveals the property known partially also for classical (unweighted) Sobolev spaces $W^{1,p}(\Omega)$, see \cite[Example 9.12]{MR802206} or \cite{MR1470421}.


 \begin{rem}
 In the proof of the case II in the Theorem \ref{tw1} we use a reflexivity property of the space $W^{s,p;\,\al\,\be}(\Omega)$ (see Proposition \ref{refl}). This explains why $p=1$ is excluded from the assumptions. It is not clear if the density property holds in this case and we leave it as an open problem.
 \end{rem}  
    
To prove the case (III) of the Theorem \ref{tw1}, we use a (weak) weighted fractional Hardy inequality, which can can be easily derived from the (weak) fractional  $(s,p,a)$-Hardy inequality, given in \cite[Corollary 3]{MR3237044} and also in \cite[Theorem 5]{DK} in the case (T') of the result below. It suffices to take the function $\phi(x)=x^{sp+\alpha+\beta}$ and notice that $\dist(y,\partial\Omega)\lesssim\dist(x,\partial\Omega)$ on the ball $B(x,R\dist(x,\partial\Omega)).$ We present this version below.
\begin{thm}(\cite[Corollary 3]{MR3237044}, \cite[Theorem 5]{DK})\label{Hardy}
Let $0<p<\infty$, $0<s<1$ and $\alpha,\beta\geq 0$. Suppose that $\Omega\neq\emptyset$ is an open, $\kappa$-plump set so that either condition (T), or condition (T'), or condition (F) holds
\begin{itemize}
\item[(T)]  $sp+\alpha+\beta <\ucodim_{A}(\partial\Omega)$, $\Omega$ is unbounded and $\xi=0$,
\item[(T')]  $sp+\alpha+\beta <\ucodim_{A}(\partial\Omega)$, $\Omega$ is bounded and $\xi=1$,
\item[(F)]  $sp+\alpha+\beta >\ocodim_{A}(\partial\Omega)$, $\Omega$ is bounded or $\partial \Omega$ is unbounded and  $\xi=0$.
\end{itemize}
Then there exist constants $c$ and $R$ such that the following inequality
\begin{equation}\label{eq:fhi}
 \int_\Omega \frac{|u(x)|^p}{d_{\Omega}(x)^{sp+\alpha+\beta}}\,dx \leq
c \int_\Omega\!\int_{\Omega\cap B(x,R\,d_{\Omega}(x))}
     \frac{|u(x)-u(y)|^p}{|x-y|^{d+sp}}\frac{\,dy}{d_{\Omega}(y)^{\beta}}\frac{\,dx}{d_{\Omega}(x)^{\alpha}} + c\xi \|u\|_{L^p(\Omega)}^p,
\end{equation}
holds for all measurable functions $u$ for which the left-hand side is finite.
\end{thm}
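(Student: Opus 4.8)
The plan is to obtain Theorem \ref{Hardy} as a direct consequence of the (weak) fractional $(s,p,a)$-Hardy inequality of \cite[Corollary 3]{MR3237044} (see also \cite[Theorem 5]{DK}), by specialising the weight and invoking one elementary geometric observation. Recall that, applied with $\phi(t)=t^{sp+\alpha+\beta}$ --- equivalently, with the extra distance-weight index $a$ equal to $\alpha+\beta$ --- that result furnishes, under exactly the hypotheses (T), (T') or (F) (which in the cited statement involve the quantity $sp+a$, here equal to $sp+\alpha+\beta$) and under the same $\kappa$-plumpness assumption on $\Omega$, constants $c,R>0$ such that
\begin{equation*}
\int_\Omega \frac{|u(x)|^p}{d_\Omega(x)^{sp+\alpha+\beta}}\,dx \;\le\; c\int_\Omega\!\int_{\Omega\cap B(x,R\,d_\Omega(x))}\frac{|u(x)-u(y)|^p}{|x-y|^{d+sp}}\,\frac{dy}{d_\Omega(x)^{\alpha+\beta}}\,dx \;+\; c\,\xi\,\|u\|_{L^p(\Omega)}^p
\end{equation*}
holds for all measurable $u$ with finite left-hand side. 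The only difference between this inequality and \eqref{eq:fhi} is that here the full power $d_\Omega(x)^{-(\alpha+\beta)}$ is attached to the point $x$, whereas in \eqref{eq:fhi} it is split as $d_\Omega(y)^{-\beta}d_\Omega(x)^{-\alpha}$.

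To bridge this gap, fix $x\in\Omega$ and $y\in B(x,R\,d_\Omega(x))$. By the triangle inequality $d_\Omega(y)\le d_\Omega(x)+|x-y|\le (1+R)\,d_\Omega(x)$, so $d_\Omega(y)\lesssim d_\Omega(x)$ on the truncation ball, and since $\beta\ge 0$ this yields $d_\Omega(x)^{-(\alpha+\beta)}\le (1+R)^{\beta}\,d_\Omega(y)^{-\beta}\,d_\Omega(x)^{-\alpha}$. Substituting this pointwise bound into the right-hand side of the displayed inequality gives
\begin{equation*}
\int_\Omega\!\int_{\Omega\cap B(x,R\,d_\Omega(x))}\frac{|u(x)-u(y)|^p}{|x-y|^{d+sp}}\,\frac{dy}{d_\Omega(x)^{\alpha+\beta}}\,dx \;\le\; (1+R)^{\beta}\int_\Omega\!\int_{\Omega\cap B(x,R\,d_\Omega(x))}\frac{|u(x)-u(y)|^p}{|x-y|^{d+sp}}\,\frac{dy}{d_\Omega(y)^{\beta}}\,\frac{dx}{d_\Omega(x)^{\alpha}},
\end{equation*}
and therefore \eqref{eq:fhi} holds with the same $R$ and with $c$ replaced by $c\,(1+R)^{\beta}$. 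If the right-hand side of \eqref{eq:fhi} happens to be infinite the inequality is trivially true, so enlarging the integrand causes no integrability issue, and the statement ``for all $u$ with finite left-hand side'' is preserved.

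I expect no genuine obstacle in this argument: the entire analytic substance --- the Whitney-type decomposition of $\Omega$ near $\partial\Omega$, the Maz'ya- and Muckenhoupt-type pointwise estimates, and the way the Assouad (co)dimension hypotheses (T), (T'), (F) control the measure of $\partial\Omega$ at each scale --- is already contained in \cite{MR3237044} and \cite{DK}. Within our reduction the only two things to check are that $\phi(t)=t^{sp+\alpha+\beta}$ satisfies the structural requirements of \cite[Corollary 3]{MR3237044} (it is a pure power with a positive exponent, hence it does), and that passing from the one-point weight $d_\Omega(x)^{-(\alpha+\beta)}$ to the split weight $d_\Omega(y)^{-\beta}d_\Omega(x)^{-\alpha}$ costs only the harmless constant $(1+R)^{\beta}$; both are immediate, which is exactly the content of the remark preceding the theorem.
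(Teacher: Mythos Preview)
Your argument is correct and matches the paper's own justification exactly: the paper states just before the theorem that it suffices to take $\phi(x)=x^{sp+\alpha+\beta}$ in \cite[Corollary~3]{MR3237044} and to use that $d_\Omega(y)\lesssim d_\Omega(x)$ on $B(x,R\,d_\Omega(x))$, which is precisely what you do (with the explicit constant $(1+R)^{\beta}$).
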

As an easy corollary in the case (T'), deriving directly from Theorem \ref{tw1} and Theorem \ref{Hardy}, we obtain the embedding $W^{s,p;\,\alpha,\beta}(\Omega)\subset L^{p}(\Omega,\dist(\cdot,\partial\Omega)^{-sp-\alpha-\beta})$.
\begin{thm}\label{tw.emb}
Let $1\leq p<\infty$ and $0<s<1$. Suppose that $\Omega\neq\emptyset$ is an open, uniform, bounded set such that $0\leq\alpha,\beta<\ucodim_{A}(\partial\Omega)$ and $sp+\alpha+\beta <\ucodim_{A}(\partial\Omega)$. Then there exists a constant $c$ such that
$$
\int_{\Omega}\frac{|f(x)|^{p}}{d_{\Omega}(x)^{sp+\alpha+\beta}}\,dx\leq c\|f\|^{p}_{W^{s,p;\,\alpha,\beta}(\Omega)}<\infty,
$$
for all $f\in W^{s,p;\,\alpha,\beta}(\Omega)$.
\end{thm}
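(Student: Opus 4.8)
\emph{Proof plan.} The plan is to obtain the estimate from the weighted fractional Hardy inequality of Theorem \ref{Hardy} in the case (T'), first on the dense class $C_c^\infty(\Omega)$ and then on all of $W^{s,p;\,\alpha,\beta}(\Omega)$ by a standard limiting argument, the density being precisely Theorem \ref{tw1}(I). First note that the hypotheses land us in the right regime: a bounded uniform domain is $\kappa$-plump, and since $\ucodim_A(\partial\Omega)\le d-\overline{\dim}_M(\partial\Omega)$, the assumption $sp+\alpha+\beta<\ucodim_A(\partial\Omega)$ forces $sp+\alpha+\beta<d-\overline{\dim}_M(\partial\Omega)$, so Theorem \ref{tw1}(I) gives $W^{s,p;\,\alpha,\beta}_0(\Omega)=W^{s,p;\,\alpha,\beta}(\Omega)$; it also puts us in case (T') of Theorem \ref{Hardy}, with $\xi=1$.

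Next, fix $\phi\in C_c^\infty(\Omega)$. Then $\phi\in W^{s,p;\,\alpha,\beta}(\Omega)$ (cf.\ Section \ref{section3}), and since $d_\Omega$ is bounded below on the compact set $\supp\phi$ the quantity $\int_\Omega|\phi(x)|^p d_\Omega(x)^{-sp-\alpha-\beta}\,dx$ is finite, so \eqref{eq:fhi} applies to $\phi$. Bounding the truncated double integral on the right-hand side of \eqref{eq:fhi} trivially from above by the full seminorm $[\phi]^p_{W^{s,p;\,\alpha,\beta}(\Omega)}$, using $\xi=1$ and the elementary inequality $a^p+b^p\le(a+b)^p$ for $p\ge 1$, we obtain
$$
\int_\Omega\frac{|\phi(x)|^p}{d_\Omega(x)^{sp+\alpha+\beta}}\,dx\le c\bigl([\phi]^p_{W^{s,p;\,\alpha,\beta}(\Omega)}+\|\phi\|_{L^p(\Omega)}^p\bigr)\le c\,\|\phi\|_{W^{s,p;\,\alpha,\beta}(\Omega)}^p .
$$

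Finally, given $f\in W^{s,p;\,\alpha,\beta}(\Omega)=W^{s,p;\,\alpha,\beta}_0(\Omega)$, choose $\phi_n\in C_c^\infty(\Omega)$ with $\phi_n\to f$ in $W^{s,p;\,\alpha,\beta}(\Omega)$. Applying the displayed bound to the differences $\phi_n-\phi_m\in C_c^\infty(\Omega)$ shows that $(\phi_n)$ is Cauchy, hence convergent, in $L^p(\Omega, d_\Omega^{-(sp+\alpha+\beta)})$; comparing that limit with $f$ along a subsequence converging almost everywhere identifies the two, and Fatou's lemma (or lower semicontinuity of the weighted $L^p$-norm) together with continuity of the $W^{s,p;\,\alpha,\beta}$-norm yields
$$
\int_\Omega\frac{|f(x)|^p}{d_\Omega(x)^{sp+\alpha+\beta}}\,dx\le\liminf_{n}c\,\|\phi_n\|_{W^{s,p;\,\alpha,\beta}(\Omega)}^p=c\,\|f\|_{W^{s,p;\,\alpha,\beta}(\Omega)}^p<\infty,
$$
which is the assertion.

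The only genuine obstacle is that Theorem \ref{Hardy} is available a priori only for functions whose left-hand side is finite; passing through $C_c^\infty(\Omega)$ before taking the limit removes it. Alternatively, one may bypass Theorem \ref{tw1} and apply \eqref{eq:fhi} directly to the truncations $f_M=(f\wedge M)\vee(-M)$ --- which have finite left-hand side since $d_\Omega^{-(sp+\alpha+\beta)}\in L^1(\Omega)$ whenever $sp+\alpha+\beta<\ucodim_A(\partial\Omega)$ and $\Omega$ is bounded --- use $|f_M(x)-f_M(y)|\le|f(x)-f(y)|$ and $|f_M|\le|f|$, and let $M\to\infty$ by monotone convergence.
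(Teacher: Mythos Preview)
Your proof is correct and follows essentially the same route as the paper: combine the Hardy inequality \eqref{eq:fhi} in case (T') with the density from Theorem~\ref{tw1}(I) (noting that uniform domains are $\kappa$-plump and that $\ucodim_A(\partial\Omega)\le d-\overline{\dim}_M(\partial\Omega)$), and pass to the limit via Fatou. The paper's own proof is just a one-line pointer to these same ingredients; you have simply written out the details and added a pleasant alternative via the truncations $f_M$.
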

Theorem \ref{tw.emb} is a generalization of the unweighted case from \cite[Theorem 4]{DK}, provided that $\Omega$ is a uniform domain.


\textbf{Notation.} Having two nonnegative functions $A$ and $B$ we use a symbol ,$\lesssim$' if there exists a constant $c>0$ such that $A\leq cB$. The constant $c$ usually depends on some parameters, like $\alpha,\,\beta,\,d,\,s,\,p,\,\Omega$, but not on the arguments of the functions $A,B$ and the set of these parameters arises from context. Moreover, we write $A\approx B$ when $A\lesssim B$ and $B\lesssim A$.

\textbf{Acknowledgements.} The author would like to thank Artur Rutkowski for careful reading of the manuscript and useful remarks, in particular helpful discussions on the proof of Theorem \ref{tw.comp} and Bartłomiej Dyda for careful reading of the manuscript and valuable comments. 
    \section{Definitions}\label{section2}
    
    We will use the same definitions as in \cite[Section 2]{DK}; for Reader's
convenience we repeat them below.
    \subsection{Assouad and Minkowski dimensions}
    
    Recall that we denote  the distance from $x\in \R^d$ to a~set $E\subset \R^d$ by
$\dist(x,E)=\displaystyle\inf_{y\in E}|x-y|$.

\begin{defn}\label{defn3}
  Let $r>0$. For open sets $\Omega\subset \R^d$
  we define the \emph{inner tubular neighbourhood}  of $\Omega$ as
\[
\Omega_{r}=\left\{x\in\Omega:d_{\Omega}(x)\leq r\right\},
\]
and for arbitrary sets $E\subset \R^d$ we define the \emph{tubular neighbourhood} of $E$ as
$$
\widetilde{E}_{r}=\left\{x\in\mathbb{R}^{d}: \dist(x, E) \leq r\right\}.
$$

\end{defn}
\begin{defn}
\cite[Section 3]{MR3205534} Let $E\subset\mathbb{R}^{d}$. The \emph{lower Assouad codimension} $\underline{\text{co\,dim}}_{A}(E)$ is defined as the supremum of all $q\geq 0$, for which there exists a constant $C=C(q)\geq 1$ such that for all $x\in E$ and $0<r<R<\diam E$ it holds
$$
\left|\widetilde{E}_{r}\cap B(x,R)\right|\leq C\left|B(x,R)\right|\left(\frac{r}{R}\right)^{q}.
$$
Conversely, the \emph{upper Assouad codimension} $\overline{\text{co\,dim}}_{A}(E)$ is defined as the infimum of all $s\geq 0$, for which there exists a constant $c=c(s)>0$ such that for all $x\in E$ and $0<r<R<\diam E$ it holds
$$
\left|\widetilde{E}_{r}\cap B(x,R)\right|\geq c\left|B(x,R)\right|\left(\frac{r}{R}\right)^{s}.
$$
\end{defn}

We remark that having strict inequality $R<\diam E$ above
makes the definitions applicable also for unbounded sets $E$;
for bounded sets $E$ we could have $R\leq \diam E$.

In Euclidean space $\mathbb{R}^{d}$ (more general - in Ahlfors $d$-regular measure metric spaces) it holds $$\underline{\text{dim}}_{A}(E)+\overline{\text{co\,dim}}_{A}(E)=\overline{\text{dim}}_{A}(E)+\underline{\text{co\,dim}}_{A}(E)=d,$$ where $\underline{\text{dim}}_{A}(E) $ and $\overline{\text{dim}}_{A}(E)$ denote respectively the well known lower and upper Assouad dimension -- see for example \cite[Section 2]{MR3205534}. Moreover, if $\underline{\text{co\,dim}}_{A}(E)=\overline{\text{co\,dim}}_{A}(E),$ we simply denote both of these values by $\text{co\,dim}_{A}(E)$.

   We recall a notion of $\sigma$-homogenity, coming from \cite[Theorem A.12]{MR1608518}.
\begin{defn}
Let $E\subset\R^d$ and let $V(E,x,\lambda,r)=\{y\in\R^d:\dist(y,E)\leq r, |x-y|\leq \lambda r\}$. We say that E is $\sigma$-\emph{homogeneous}, if there exists a~constant $L$ such that
$$
|V(E,x,\lambda,r)|\leq Lr^{d}\lambda^{\sigma}
$$
for all $x\in E$, $\lambda \geq 1$ and $r>0$.
\end{defn}
If $0<r<R<\diam(E)$, then taking $\lambda=R/r$ in the definition gives
\[
  \left|\widetilde{E}_{r} \cap B(x,R)\right| = \left|V\Big(E,x,\frac{R}{r},r\Big)\right| \leq C\left|B(x,R)\right|\left(\frac{r}{R}\right)^{d-\sigma},
\]
where $C=C(d,E)$ is a constant. This means that if $\ucodim_{A}(E)=s$, then  $(d-s)$-homogeneous sets are precisely these sets $E$, for which the supremum in the definition of the lower Assouad codimension is attained. For the definition of the concept of homogenity 
from a different point of view the Reader may also see \cite[Definition 3.2]{MR1608518}.

    \begin{defn}\label{mink}
    The \emph{upper Minkowski dimension} of a set $E\subset\R^d$ is defined as
$$
\overline{\text{dim}}_{M}(E)=\inf\{s\geq0: \limsup_{r\rightarrow0}\left|\widetilde{E}_{r}\right|r^{d-s}=0\},
$$
see for example \cite[Section 2]{MR4144553}.
    \end{defn}
    It is not hard to see that $\ucodim_{A}(E)\leq d-\overline{\dim}_{M}(E)$ and the equality holds if $E$ is $(d-\ucodim_{A}(E))$ - homogeneous. Moreover (considering again open, bounded sets $\Omega$), the \emph{distance zeta function}
    $$
    \zeta_{\Omega}(q):=\int_{\Omega}\frac{dx}{d_{\Omega}(x)^{q}}
    $$
    is finite if $q<d-\overline{\dim}_{M}(\partial\Omega)$ and infinite if $q>d-\overline{\dim}_{M}(\partial\Omega)$ (see \cite[Lemma 3.3 and Lemma 3.5]{MR4144553}).

	We recall a geometric notion from \cite{10.2748/tmj/1178228081}, appearing among other assumptions in Theorem \ref{Hardy}.
	\begin{defn}
		A set $E\subset \R^d$ is {\em $\kappa$-plump}
		with $\kappa\in (0,1)$ if, for each $0<r< \diam(E)$ and each $x\in \overline{E}$, there
		is $z\in \overline{B}(x,r)$ such that
		$B(z,\kappa r)\subset E$.\end{defn}

    \subsection{Whitney decomposition and operator $P^{\eta}$}
    Let $\Omega$ be an open, nonempty, proper subset of $\R^{d}$. Let $Q$ be any closed cube in $\R^d$. We denote by $l(Q)$ the length of the side of $Q$ and by $x_{Q}$ the center of $Q$. Following \cite{MR3667439}, there exists a family of dyadic cubes $\mathcal{W}=\{Q_{n}\}_{n\in\mathbb{N}}$, called the \emph{Whitney decomposition}, satisfying for all $Q,S\in\mathcal{W}$ the conditions:
    \begin{itemize}
        \item $\Omega=\bigcup_{n}Q_{n};$
        \item if $Q\neq S$, then $\Int Q\cap \Int S=\emptyset;$
        \item there exists a constant $C=C(\mathcal{W})$ such that $C\diam Q\leq \dist(Q,\partial\Omega)\leq 4C\diam Q;$
        \item if $Q\cap S\neq\emptyset$, then $l(Q)\leq 2l(S)$;
        \item if $Q\subset 5S$ then $l(S)\leq 2l(Q)$.
    \end{itemize}
    
     The dilation of the cube $Q$, $cQ$ for $c>0$, is always taken with respect to its center, that is $cQ$ is a cube with the same center as $Q$, but the length of the side $cl(Q)$.
    
    Inspired by \cite{MR3667439} we define a \emph{shadow} of a cube $Q\in\mathcal{W}$ as 
    $$
    \Sh_{\theta}(Q)=\{S\in\mathcal{W}:S\subset B\left(x_{Q},\theta l(Q)\right)\}.
    $$
    The ,,realization" of $\Sh_{\theta}$ is $\SH_{\theta}(Q)=\bigcup\Sh_{\theta}(Q)$. When $\theta$ is fixed, we abbreviate the notation as $\Sh_{\theta}(Q)=:\Sh(Q)$ and $\SH_{\theta}(Q)=:\SH(Q)$. 
 
    For all $Q,S\in\mathcal{W}$ we define their \emph{long distance} $D$ as
    $$
    D(Q,S)=l(Q)+\dist(Q,S)+l(S).
    $$
    
    We say that a sequence of cubes $(Q,R_{1},R_{2},\dots,R_{n},S)$ is a \emph{chain}, if all two adjacent cubes have nonempty intersection. We denote $(Q,R_{1},R_{2},\dots,R_{n},S)=[Q,S]$ and $[Q,S)=[Q,S]\setminus{S}$.
    
    The Whitney decomposition is \emph{admissible}, if there exists $a>0$ such that for all $Q,S\in\mathcal{W}$ there exists a chain $[Q,S]=(Q_{1},Q_{2},\dots,Q_{n})$ satisfying
    \begin{itemize}
        \item $\displaystyle\sum_{i=1}^{n}l(Q_{i})\leq \frac{1}{a}D(Q,S)$;
        \item there exists $1\leq i_{0}\leq n$ such that $l(Q_{i})\geq a D(Q,Q_{i})$ for all $1\leq i\leq i_{0}$ and $l(Q_{i})\geq a D(Q_{i},S)$ for all $i_{0}\leq i\leq n$. We denote $Q_{i_{0}}=:Q_{S}$. This is the so-called \emph{central} cube in the chain $[Q,S]$.
    \end{itemize}
    
    As stated in \cite{MR3667439}, for a $\gamma$-admissible Whitney decomposition we can always take sufficiently large $\rho=\rho_{\gamma}>1$ such that for every $\gamma$-admissible chain of cubes $[Q,S]$ we have $Q\in\Sh_{\rho_{\gamma}}(P)$ for $P\in[Q,Q_S]$ and $5Q\subset\SH_{\rho_{\gamma}}(Q)$ for every Whitney cube $Q\in\mathcal{W}$.

    Next, we recall the definition and basic properties of the operator $P^{\eta}$, defined in \cite{MR4190640}. From now on we fix a Whitney decomposition $\mathcal{W}$ such that $C(\mathcal{W})=1$ (see \cite{MR0290095}) and $0<\varepsilon<\sqrt{5/4}-1<\frac{1}{4}$. If $Q$ is a cube, we denote by $Q^{*}$ the cube $Q$ ,,blown up'' $(1+\varepsilon)$ times, that is the cube with the same center $x_{Q^{*}}=x_{Q}$, but the length of the side $l(Q^{*})=(1+\varepsilon)l(Q)$. The cube $Q_{n}^{**}$ is defined in a similar way, that is $Q_{n}^{**}=\left(Q_{n}^{*}\right)^{*}$. Notice that our choice of $\varepsilon$ guarantees that $(1+\varepsilon)^{2}<\frac{5}{4}$ and in consequence $Q_{n}^{**}\subset \frac{5}{4}Q_{n}$. Moreover, each point $x\in\Omega$ belongs to at most $12^d$ cubes $Q_n^{**}$.
    
    Let $\{\psi_{n}\}_{n\in\mathbb{N}}$ be a partition of unity adjusted to the Whitney decomposition $\mathcal{W}=\{Q_{n}\}_{n\in\mathbb{N}}$ of $\Omega$, that is a family of functions satisfying $0\leq\psi_{n}\leq 1$, $\psi_{n}=1$ on $Q_{n}$, $\supp\psi_{n}\subset Q_{n}^{*}$, $\psi_{n}\in C_{c}^{\infty}(\Omega)$, $\sum_{n}\psi_{n}=1$ and $|\psi_{n}(x)-\psi_{n}(y)|\leq C|x-y|/l(Q_{n})$ for some positive constant $C$ independent of $Q_{n}$. Let us also fix a nonnegative function $h\colon\R^d\to\R$ with the following properties: $\supp h=B(0,1)$, $\int_{R^d}h(x)\,dx=1$, $h\in C^{\infty}(\R^d)$. For $\delta>0$ we define its dilation as $h_{\delta}(x)=\delta^{-d}h(x/\delta)$. Moreover, let $\eta\colon\mathcal{W}\to(0,\infty)$ be any function satisfying $\eta(Q)<\frac{\varepsilon}{2}l(Q)$ for all $Q\in\mathcal{W}$ (a typical example is $\eta(Q)=\delta\,l(Q)$ for any $\delta<\varepsilon/2$).   For $f\in L^{1}_{loc}(\Omega)$, extended by $0$ on $\R^d\setminus\Omega$, we define the operator $P^{\eta}$  as 
    
    $$
    P^{\eta}f=\sum_{n=1}^{\infty}(f\psi_{n})*h_{\eta(Q_{n})}.
    $$
    
    Here $f*g(x)=\int_{\R^d}f(y)g(x-y)\,dy$ is the standard convolution operation. It was proved in \cite{MR4190640} that $P^{\eta}$ is well defined, $P^{\eta}f\in C^{\infty}(\Omega)$ and $P^{\eta}$ maps the space of all compactly supported, locally integrable functions into $C_{c}^{\infty}(\Omega)$ (see \cite{MR4190640}, Propositions 1 and 2).
  
     \subsection{Uniform domains} 
     There are two equivalent ways to define the notion of uniform domain. The first one comes from \cite{10.2748/tmj/1178228081}, and the second one uses the Whitney decomposition and chains of cubes and can be found for example in \cite{MR3667439}. We present both definitions here.
     \begin{defn}
     A domain (i.e. connected, open set) $\Omega\subset\mathbb{R}^{d}$ is \emph{uniform}, if there exists a constant $C\geq 1$ such that for all points $x,y\in\Omega$ there is a curve $\gamma\colon[0,l]\to\Omega$ joining them, parameterized by arc length and satisfying $l\leq C|x-y|$ and $\dist(z,\partial\Omega)\geq \frac{1}{C}\min\{|z-x|,|z-y|\}$ for all $z\in\gamma$. Equivalently, a domain $\Omega\subset\mathbb{R}^{d}$ is uniform, if there exists an admissible Whitney decomposition of $\Omega$.
     \end{defn}
     
     Uniform domains and various reformulations of the definitions above appear also in \cite{MR581801}, \cite{MR595191} and \cite{MR565886}. To give a concrete, nontrivial example, we remark here that the Koch snowflake is known to be uniform, despite the highly irregular behaviour of its boundary. It is also $\sigma$-homogeneous with $\sigma=\log_{3}4$, according to \cite[Theorem 1.1]{MR2269586}.
      
     \subsection{Muckenhoupt class $A_{1}$ and Hardy-Littlewood maximal operator}\label{muckenhoupt}
     \begin{defn}
    For $f\in L^{1}_{loc}(\R^d)$ the (non-centered) maximal Hardy-Littlewood operator is defined as 
    $$
    Mf(x)=\sup_{Q\ni x}\frac{1}{|Q|}\int_{Q}f(y)\,dy,
    $$
    where supremum is taken over all cubes $Q$ containing $x$. Equivalently, $M$ can be defined using balls containing $x$ instead of cubes (up to a multiplicative constant). It is well known that this operator is bounded on $L^{p}(\R^d)$, whenever $1<p\leq\infty$. 
    \end{defn}
    \begin{defn}
    We say that a positive weight $w$ belongs to the \emph{Muckenhoupt class} $A_{1}$, if there exists a constant $A>0$ such that for all cubes $Q\subset\R^{d}$ it holds
    \begin{equation}\label{muck}
    \frac{1}{|Q|}\int_{Q}w(x)\,dx\leq A\displaystyle\inf_{y\in Q}w(y).
    \end{equation}
    \end{defn}
     Notice that by (\ref{muck}) we can easily see that if $w\in A_1$, then the maximal Hardy-Littlewood operator acting on the function $w$ satisfies
     \begin{equation}\label{Mw}
    Mw(x)=\sup_{Q\ni x}\frac{1}{|Q|}\int_{Q}w(y)\,dy\leq A w(x),     
     \end{equation}
     where $A$ depends on $w$. This property will be important for us later in the proof of Theorem \ref{tw.comp}. Moreover, it was proved in \cite[Theorem 1.1 (B)]{MR3900847} that the weight $d_{\Omega}^{-\alpha}$ belongs to the Muckenhoupt class $A_{1}$ if and only if $0\leq\alpha<\ucodim_{A}(\partial\Omega)$. Hence, by \eqref{Mw}, $Md_{\Omega}^{-\alpha}$ satisfies
    \begin{equation}\label{muck2}
    Md_{\Omega}^{-\alpha}(x)\leq A\,d_{\Omega}(x)^{-\alpha},   
    \end{equation}
where the constant $A$ depends on $\Omega$ and $\alpha\in[0,\ucodim_A(\partial\Omega))$.

    \section{Lemmas}\label{section3}
     We start with showing that under some assumptions $C_c^{\infty}(\Omega)$ is a subset of $W^{s,p;\,\al,\be}(\Omega)$ and in consequence the latter is not trivial. This is an analogue of \cite[Lemma 2.1]{MR3420496}, where the same fact was established for $\Omega=\R^d\setminus\{0\}$. Although we consider bounded domains, it agrees with the cited result in some aspects, as we have $\text{co\,dim}_{A}(\{0\})=d$. Noteworthy, if one of the exponents $\al,\be$ is  nonpositive, then the corresponding weight is bounded and this case is trivial.

   \begin{lem}\label{inlcusion}
    Let $\Omega\subset\R^d$ be a bounded, uniform domain. Suppose that $0<s<1$, $1\leq p<\infty$, $0\leq \al,\be<\ucodim_{A}(\partial\Omega)$ and $\al+\be<d-\overline{\emph{\text{dim}}}_M(\partial\Omega)+p(1-s)$. Then $C_c^{\infty}(\Omega)\subset W^{s,p;\,\al,\be}(\Omega)$.
   \end{lem}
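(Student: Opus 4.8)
The plan is to fix $f\in C_c^\infty(\Omega)$ with $K=\supp f$ and to split the double integral defining $[f]_{W^{s,p;\,\al,\be}(\Omega)}$ according to how far $x$ and $y$ are from $K$. Since $f$ is smooth with compact support inside the open set $\Omega$, there is $\delta>0$ with $d_\Omega(x)\ge\delta$ for all $x\in K$, and $f$ is Lipschitz. I would write the region of integration as the union of three pieces: (i) $x,y\in\widetilde K_\delta\cap\Omega$ (both points near the support), (ii) $x\in\widetilde K_\delta$, $y\notin\widetilde K_\delta$ (or vice versa, by symmetry of the roles after relabelling — here one has to be slightly careful because the weights $w,v$ are not symmetric, so both mixed terms must be treated), and (iii) $x,y\notin\widetilde K_\delta$, where the integrand vanishes identically. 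So only (i) and the two mixed terms in (ii) contribute.

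For region (i), on $\widetilde K_\delta\cap\Omega$ the weights $d_\Omega^{-\al},d_\Omega^{-\be}$ are still singular near $\partial\Omega$, but the numerator is controlled by the Lipschitz bound $|f(x)-f(y)|\lesssim|x-y|$, so the inner integral in $y$ is $\lesssim\int_{B(x,\diam\Omega)}|x-y|^{p-d-sp}\,dy\cdot\sup$-weight, which converges near $x$ since $p-sp>0$; the remaining task is to bound $\int_{\widetilde K_\delta\cap\Omega}\int_{\widetilde K_\delta\cap\Omega}|x-y|^{p-d-sp}d_\Omega(y)^{-\be}d_\Omega(x)^{-\al}\,dy\,dx$. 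Here is where the Minkowski-dimension hypothesis enters: after integrating the $|x-y|^{p-d-sp}$ factor against $d_\Omega(y)^{-\be}$ one is left with something like $\int_\Omega d_\Omega(x)^{-\al-\be+p(1-s)}\,dx$ up to lower-order terms, which is exactly the distance zeta function $\zeta_\Omega(\al+\be-p(1-s))$, finite precisely because $\al+\be-p(1-s)<d-\overline{\dim}_M(\partial\Omega)$ by assumption. I would make this rigorous either by a direct Whitney-decomposition estimate (summing $\sum_{Q\in\W}l(Q)^{d}\operatorname{dist}(Q,\partial\Omega)^{-\al-\be+p(1-s)}$ and comparing with $\zeta_\Omega$) or, more cleanly, by invoking Theorem \ref{tw.comp}: since $0\le\al,\be<\ucodim_A(\partial\Omega)$, the full seminorm is comparable to the truncated one over $B(x,\theta d_\Omega(x))$, and on that ball $d_\Omega(y)\approx d_\Omega(x)$, reducing the double integral to $\int_\Omega d_\Omega(x)^{-\al-\be}\big(\int_{B(x,\theta d_\Omega(x))}|x-y|^{p-d-sp}dy\big)dx\approx\int_\Omega d_\Omega(x)^{p(1-s)-\al-\be}\,dx=\zeta_\Omega(\al+\be-p(1-s))<\infty$.

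For the mixed term with $x$ near $K$ and $y$ far from $\widetilde K_\delta$, note $|f(x)-f(y)|=|f(x)|\le\|f\|_\infty$ and $|x-y|\gtrsim\operatorname{dist}(x,\partial\widetilde K_\delta)$ is bounded below, in fact $|x-y|\ge c>0$ whenever $x\in K$; more carefully one integrates over $x\in\widetilde K_\delta$ where $|f(x)|$ may be small but $|x-y|$ can also be small — however if $x\in\widetilde K_\delta\setminus K$ then $f(x)=0$ so again $|f(x)-f(y)|=0$. Hence effectively $x\in K$, and there $|x-y|\ge\delta$ for $y\notin\widetilde K_\delta$, so the kernel $|x-y|^{-d-sp}$ is bounded and we get $\lesssim\|f\|_\infty^p\int_K d_\Omega(x)^{-\al}\int_{\Omega}d_\Omega(y)^{-\be}\,dy\,dx$, which is finite since $d_\Omega\ge\delta$ on $K$ and $\int_\Omega d_\Omega^{-\be}=\zeta_\Omega(\be)<\infty$ because $\be<\ucodim_A(\partial\Omega)\le d-\overline{\dim}_M(\partial\Omega)$. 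The symmetric term ($y$ near $K$, $x$ far) is handled identically with the roles of $\al,\be$ swapped. Finally $f\in L^p(\Omega)$ is immediate since $f$ is bounded and $\Omega$ is bounded. The main obstacle is the estimate on region (i): controlling the genuinely singular weighted double integral near $\partial\Omega$, and this is exactly where the sharp exponent condition $\al+\be<d-\overline{\dim}_M(\partial\Omega)+p(1-s)$ is forced and where Theorem \ref{tw.comp} (requiring $\al,\be<\ucodim_A(\partial\Omega)$) does the real work.
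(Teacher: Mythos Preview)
Your ``cleaner'' argument via Theorem~\ref{tw.comp} is precisely the paper's proof: one applies the comparability theorem with $\theta=\tfrac12$ to the full seminorm, uses $d_\Omega(y)\approx d_\Omega(x)$ on $B(x,\tfrac12 d_\Omega(x))$ together with the Lipschitz bound $|\varphi(x)-\varphi(y)|\lesssim|x-y|$, and concludes with $\int_\Omega d_\Omega(x)^{-\al-\be+p(1-s)}\,dx=\zeta_\Omega(\al+\be-p(1-s))<\infty$. The paper does this in four lines, applied directly to $[\varphi]^p_{W^{s,p;\,\al,\be}(\Omega)}$ with no preliminary splitting into near/far regions.

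Your decomposition into regions (i)--(iii) is therefore redundant scaffolding for that route, but it is also a valid and genuinely different argument on its own --- provided you fix the choice of $\delta$. If you take, say, $\delta=\tfrac12\dist(K,\partial\Omega)$, then $\widetilde K_\delta$ is compactly contained in $\Omega$ and the weights are \emph{bounded} there (contrary to your remark that they are ``still singular near $\partial\Omega$''); region (i) is then immediate from the Lipschitz estimate alone, and the two mixed terms in (ii) reduce to the finiteness of $\zeta_\Omega(\be)$ and $\zeta_\Omega(\al)$, which holds since $\al,\be<\ucodim_A(\partial\Omega)\le d-\overline{\dim}_M(\partial\Omega)$. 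This elementary route needs neither uniformity nor Theorem~\ref{tw.comp}, and in fact never uses the hypothesis $\al+\be<d-\overline{\dim}_M(\partial\Omega)+p(1-s)$. What the paper's approach buys in exchange is a single global estimate valid for any Lipschitz function on $\Omega$, not just compactly supported ones; compact support is essential to your splitting argument.
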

    \begin{proof}
     Let $\varphi\in C_c^{\infty}(\Omega)$. Then $\varphi$ is Lipschitz and locally integrable, so, by Theorem \ref{tw.comp} with $\theta=\frac{1}{2}$ we have
     \begin{align*}
      [\varphi]^{p}_{W^{s,p;\,\al,\be}(\Omega)}&\lesssim\int_\Omega\int_{B\left(x,\frac{1}{2} d_\Omega(x)\right)}\frac{|\varphi(x)-\varphi(y)|^{p}}{|x-y|^{d+sp}}d_\Omega(x)^{-\al}d_\Omega(y)^{-\be}\,dy\,dx\\
      &\lesssim\int_{\Omega}d_\Omega(x)^{-\al-\be}\,dx\int_{B\left(x,\frac{1}{2} d_\Omega(x)\right)}\frac{|\varphi(x)-\varphi(y)|^{p}}{|x-y|^{d+sp}}\,dy\\
      &\lesssim\int_{\Omega}d_\Omega(x)^{-\al-\be}\,dx\int_{B\left(x,\frac{1}{2} d_\Omega(x)\right)}\frac{dy}{|x-y|^{d+sp-p}}\\
      &\lesssim\int_\Omega d_\Omega(x)^{-\al-\be+p(1-s)}\,dx<\infty,
     \end{align*}
     where the last inequality follows from the properties of the distance zeta function.
    \end{proof}
    \begin{rem}
    We make an easy observation that for any Borel subset $A\subset\Omega$ and $\al,\be\geq 0$ it holds
     \begin{equation}\label{2}
    \int_{A}\int_{A}\frac{|f(x)-f(y)|^{p}}{|x-y|^{d+sp}}d_\Omega(x)^{-\al}d_\Omega(y)^{-\beta}\,dy\,dx\leq 2\int_A\int_A \frac{|f(x)-f(y)|^{p}}{|x-y|^{d+sp}}d_\Omega(x)^{-\al-\be}\,dy\,dx.     
     \end{equation}
    Indeed, to prove \eqref{2} it suffices to split the inner integral into integrals over $A\cap\{d_\Omega(x)\geq d_\Omega(y)\}$ and $A\cap\{d_\Omega(x)< d_\Omega(y)\}$ and use the symmetry between variables $x$ and $y$. According to above, if we abandon the assumption about the uniformity of $\Omega$ in Lemma \ref{inlcusion}, then, using \eqref{2}, if $\Omega$ is bounded, we can analogously show that $C_c^{\infty}(\Omega)\subset W^{s,p;\,\al,\be}(\Omega)$ for $\al+\be<d-\overline{\dim}_M(\partial\Omega)$. Interestingly, this is a different range of parameters than in the Lemma \ref{inlcusion}.
    
    Moreover, if $C_{c}^{\infty}(\Omega)\subset W^{s,p;\,\al,\be}(\Omega)$ and $\Omega$ is bounded, then we cannot have $\al,\be>d-\overline{\dim}_M(\partial\Omega)$. Indeed, if $\varphi\in C_{c}^{\infty}(\Omega)$, then simple calculation shows that
    \begin{align*}
    [\varphi]^{p}_{W^{s,p;\,\al,\be}(\Omega)}&\geq \diam(\Omega)^{-d-sp}\int_\Omega\int_\Omega |\varphi(x)-\varphi(y)|^{p}d_\Omega(y)^{-\be}d_\Omega(x)^{-\al}\,dy\,dx\\
    &\geq\diam(\Omega)^{-d-sp}\int_{\supp\varphi}\int_{\Omega\setminus\supp\varphi} |\varphi(x)|^{p}d_\Omega(y)^{-\be}d_\Omega(x)^{-\al}\,dy\,dx.
    \end{align*}
    The inner integral $\int_{\Omega\setminus\supp\varphi}d_\Omega(y)^{-\beta}\,dy$ is infinite if $\beta>d-\overline{\dim}_M(\partial\Omega)$. The case when $\al>d-\overline{\dim}_M(\partial\Omega$ can be obtained similarly.
    \end{rem}
    \begin{defn}\label{loc}
    A weight $w\colon\Omega\to\R^d$ is \emph{locally comparable to a constant} if for every compact subset $K\subset\Omega$ there exists $C_{K}>0$ such that $\frac{1}{C_{K}}\leq w(x)\leq C_{K}$ for almost all $x\in K$.
    \end{defn}
    The following Theorem is a generalization of \cite[Theorem 12]{MR4190640}, where the same fact was proved for $w=v$.
    \begin{thm}\label{gen}
    Let $\Omega\subset\R^d$ be an nonempty open set, $0<s<1,\,p\in[1,\infty)$. Denote 
    $$
    \widetilde{W}^{s,p;\,w,v}(\Omega)=\left\{f\colon\Omega\to\R^d \text{ \emph{measurable}}:\int_{\Omega}\int_{\Omega}\frac{|f(x)-f(y)|^{p}}{|x-y|^{d+sp}}\,w(x)\,v(y)\,dx\,dy<\infty\right\}.
    $$
    We understand $\widetilde{W}^{s,p;\,w,v}(\Omega)$ as a semi-normed space. If $w$ and $v$ are locally bounded and satisfy the integral condition
    \begin{equation}\label{wv}
    \int_{\Omega}\frac{w(x)}{(1+|x|)^{d+sp}}\,dx<\infty,\,\int_{\Omega}\frac{v(x)}{(1+|x|)^{d+sp}}\,dx<\infty,
    \end{equation}
    then $C^{\infty}(\Omega)\cap \widetilde{W}^{s,p;\,w,v}(\Omega)$ is dense in $\widetilde{W}^{s,p;\,w,v}(\Omega)$. Moreover, we have 
     $$
   W^{s,p;\,w,v}_{0}(\Omega)=W^{s,p;\,w,v}_{c}(\Omega).
    $$
    \end{thm}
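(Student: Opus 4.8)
The plan is to use the smoothing operator $P^{\eta}$ of \cite{MR4190640} as the approximation scheme and to derive both assertions from a single dominated--convergence estimate. Since $P^{\eta}f\in C^{\infty}(\Omega)$ for every admissible $\eta$ by \cite[Propositions 1 and 2]{MR4190640}, and since $[P^{\eta}f]_{W^{s,p;\,w,v}(\Omega)}\le[P^{\eta}f-f]_{W^{s,p;\,w,v}(\Omega)}+[f]_{W^{s,p;\,w,v}(\Omega)}$, the density statement reduces to the following: for every $f\in\widetilde{W}^{s,p;\,w,v}(\Omega)$ and every sequence $\delta_{k}\in(0,\varepsilon/2)$ with $\delta_{k}\downarrow 0$, the choice $\eta_{k}(Q)=\delta_{k}\,l(Q)$ yields $[P^{\eta_{k}}f-f]_{W^{s,p;\,w,v}(\Omega)}\to 0$; granting this, $P^{\eta_{k}}f\in C^{\infty}(\Omega)\cap\widetilde{W}^{s,p;\,w,v}(\Omega)$ and we are done.

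To prove this I would apply the dominated convergence theorem on $\Omega\times\Omega$ to
$$
F_{k}(x,y)=\frac{\bigl|(P^{\eta_{k}}f-f)(x)-(P^{\eta_{k}}f-f)(y)\bigr|^{p}}{|x-y|^{d+sp}}\,w(x)\,v(y).
$$
Pointwise a.e.\ convergence $F_{k}(x,y)\to0$ is easy: only boundedly many $\psi_{n}$ are nonzero near any given point, standard mollification gives $P^{\eta_{k}}f(x)\to f(x)$ for a.e.\ $x$, and hence by Fubini $(P^{\eta_{k}}f-f)(x)-(P^{\eta_{k}}f-f)(y)\to0$ for a.e.\ $(x,y)$, the diagonal being null. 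The substantive work is the construction of a $k$-independent integrable majorant for $F_{k}$. Writing $P^{\eta}f-f=\sum_{n}\bigl((f\psi_{n})*h_{\eta(Q_{n})}-f\psi_{n}\bigr)$, the $n$-th summand is supported in $Q_{n}^{**}\subset\tfrac54 Q_{n}$, each point lies in at most $12^{d}$ of the $Q_{n}^{**}$, the mollification radius obeys $\eta(Q_{n})<\tfrac{\varepsilon}{2}l(Q_{n})$, and $|\psi_{n}(x)-\psi_{n}(y)|\le C|x-y|/l(Q_{n})$; splitting $(f\psi_{n})(x-z)-(f\psi_{n})(y-z)$ into an increment of $f$ times $\psi_{n}$ plus $f$ times an increment of $\psi_{n}$, one bounds $\bigl|(P^{\eta}f-f)(x)-(P^{\eta}f-f)(y)\bigr|^{p}$ by a finite sum over the boundedly many relevant Whitney cubes of increments of $f$ between nearby points and of terms of the type $|f|^{p}\bigl(|x-y|/l(Q_{n})\bigr)^{p}$. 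I would then split $\Omega\times\Omega$ into the near-diagonal region $\{|x-y|\le\tfrac14 d_{\Omega}(x)\}$, where the two points are seen by comparable cubes and every $f$-increment term is dominated by the integrand of $[f]^{p}_{W^{s,p;\,w,v}(\Omega)}$ (integrable since $f\in\widetilde{W}^{s,p;\,w,v}(\Omega)$), and the complementary region, where $|x-y|$ is comparable to the side lengths of the cubes in play, hence bounded below in terms of the distances to $\partial\Omega$; there the kernel is harmless and one is left with tail integrals built from $|f|^{p}w$ and $|f|^{p}v$ against $(1+|\cdot|)^{-d-sp}$, which are finite by the local boundedness of $w,v$ together with \eqref{wv} (for the latter one may first record, using a fixed reference cube in $\Omega$ together with \eqref{wv}, that $\int_{\Omega}|f(x)|^{p}w(x)(1+|x|)^{-d-sp}\,dx<\infty$ and symmetrically for $v$). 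Because $w\neq v$ there is no symmetry in $x\leftrightarrow y$ to exploit, so the ``$w(x)$ part'' and the ``$v(y)$ part'' of each estimate must be carried out separately; this is the only genuine departure from the case $w=v$ of \cite[Theorem 12]{MR4190640}. Dominated convergence then gives the claimed convergence.

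For the identity $W^{s,p;\,w,v}_{0}(\Omega)=W^{s,p;\,w,v}_{c}(\Omega)$ the inclusion $\subseteq$ is immediate. For $\supseteq$ I would take a compactly supported $f\in W^{s,p;\,w,v}(\Omega)$; then $P^{\eta}f\in C_{c}^{\infty}(\Omega)$ by \cite[Propositions 1 and 2]{MR4190640}, and it remains to check $P^{\eta_{k}}f\to f$ in the full norm $\|\cdot\|_{L^{p}(\Omega)}+[\cdot]_{W^{s,p;\,w,v}(\Omega)}$. The $L^{p}(\Omega)$ convergence is standard mollification, since only the finitely many $\psi_{n}$ whose cubes meet a fixed neighbourhood of $\supp f$ contribute; and the seminorm convergence is the estimate above, now considerably simpler because $f$ and every $P^{\eta_{k}}f$ are supported in a fixed compact set, so that every tail integration runs over a bounded region where the weights are bounded above. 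Hence each element of $W^{s,p;\,w,v}_{c}(\Omega)$ is a norm limit of functions in $C_{c}^{\infty}(\Omega)$, i.e.\ lies in $W^{s,p;\,w,v}_{0}(\Omega)$.

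I expect the main obstacle to be the construction of the $k$-independent integrable majorant: it forces careful Whitney-cube bookkeeping (bounded overlap of the $Q_{n}^{**}$, the $l(Q_{n})^{-1}$ normalisation in the Lipschitz bound for $\psi_{n}$, the confinement $\eta(Q_{n})<\tfrac{\varepsilon}{2}l(Q_{n})$) together with a correct invocation of \eqref{wv}, applied now to each of the two weights separately because of the loss of $x\leftrightarrow y$ symmetry.
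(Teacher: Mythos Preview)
Your overall plan coincides with the paper's: both use the operator $P^{\eta}$ with $\eta_k(Q)=\delta_k\,l(Q)$, both reduce everything to showing $[P^{\eta_k}f-f]_{W^{s,p;\,w,v}(\Omega)}\to 0$, and both recognise that the only new feature relative to \cite[Theorem~12]{MR4190640} is the loss of the $x\leftrightarrow y$ symmetry, so that the $w$-tail and the $v$-tail must be treated separately. The paper's decomposition into $I_1$ (local, over $Q_n^{*}\times Q_n^{**}$) and the two off-diagonal pieces $I_2,I_3$ (over $Q_n^{*}\times(\Omega\setminus Q_n^{**})$ with the weights in the two orders $w(x)v(y)$ and $w(y)v(x)$) is exactly your near/far split together with your two separate tail estimates, and your reference-cube argument that $\int_\Omega |f|^p w\,(1+|\cdot|)^{-d-sp}<\infty$ is the ``properly modified Proposition~9'' the paper invokes.

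There is, however, one genuine gap in your mechanism. A $k$-independent \emph{pointwise} majorant for $F_k$ on $\Omega\times\Omega$ is not available: the mollification replaces the increment $f(x)-f(y)$ by $f(x-z)-f(y-z)$ with $|z|\le\eta_k(Q_n)$, and the only candidate pointwise bound, $\sup_{|z|\le\varepsilon l(Q_n)/2}|f(x-z)-f(y-z)|^p\,|x-y|^{-d-sp}w(x)v(y)$, is a directional (diagonal) maximal function of the Gagliardo integrand and need not be integrable. In particular your near-diagonal claim that ``every $f$-increment term is dominated by the integrand of $[f]^{p}_{W^{s,p;\,w,v}(\Omega)}$'' is false pointwise. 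The paper circumvents this by first passing to the bound
\[
[P^{\eta_k}f-f]^p_{W^{s,p;\,w,v}(\Omega)}\le 12^{d(p-1)}\sum_n\int_{\R^d}\|\tau_{\eta_k(Q_n)u}g_n-g_n\|^p_{L^p(\R^{2d},\,w\times v)}\,h(u)\,du,
\]
with $g_n(x,y)=\bigl(f(x)\psi_n(x)-f(y)\psi_n(y)\bigr)|x-y|^{-d/p-s}\ind_{\Omega\times\Omega}$, and then applying dominated convergence to the \emph{sum over $n$ and the $u$-integral}: each $\|\tau_t g_n-g_n\|\to 0$ by continuity of translations in $L^p$ (the local piece $I_1$ lives on a compact set where the weights are bounded; the tails $I_2,I_3$ have the translation acting only in the compactly supported variable), while your near/far estimates furnish the summable, $t$-independent bound on $I_1+I_2+I_3$. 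All your ingredients survive intact once the argument is reorganised this way. One further detail the paper handles and you do not: if $w$ or $v$ vanish on a set of positive measure the reference-cube step can fail, so the paper first augments $w,v$ by positive, locally comparable to a constant weights still satisfying \eqref{wv}, which only enlarges the seminorm.
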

    \begin{proof}
The proof follows the proof of \cite[Theorem 12]{MR4190640}. First, we fix a Whitney decomposition $\mathcal{W}=\{Q_n\}_{n\in\mathbb{N}}$ of $\Omega$ with a constant $C(\mathcal{W})=1$. We extend $w$ and $v$ by $0$ outside $\Omega$. If $w$ or $v$ take the value zero on $\Omega$, then we can artificially augment them by adding a positive, locally comparable to a constant weights $w'$, $v'$, which in addition satisfy \eqref{wv}. New weights $w+w'$ and $v+v'$ are also locally comparable to a constant, positive and satisfy \eqref{wv}. In this case $w$ and $v$ should be replaced by $w+w'$ and $v+v'$ in all the computations below.

Denote by $\tau_{y}$ the translation operator, that is $\tau_{y}f(x)=f(x-y),\,x,y\in \R^d$, and let $M=12^{d(p-1)}$. Moreover, let $f\in \widetilde{W}^{s,p;\,w,v}(\Omega)$ and
$$
g_{n}(x,y)=\frac{f(x)\psi_{n}(x)-f(y)\psi_{n}(y)}{|x-y|^{\frac{d}{p}+s}}\ind_{\Omega\times\Omega}(x,y).
$$
We have
$$
[P^{\eta_{k}}f-f]^{p}_{W^{s,p;\, w,v}(\Omega)}\leq M\sum_{n=1}^{\infty}\int_{\R^d}\|\tau_{\eta_{k}(Q_{n})u}g_{n}-g_{n}\|^{p}_{L^{p}(\R^{2d},w\times v)}\,h(u)\,du
$$
and, for $t<\eta_{k}(Q_{n})$,
\begin{align*}
 &\|\tau_{t}g_{n}-g_{n}\|^{p}_{L^{p}(\R^{2d},w\times v)}\\
 &\leq \int_{Q_{n}^{*}}\int_{Q_{n}^{**}}\frac{|f(x-t)\psi_{n}(x-t)-f(y-t)\psi_{n}(y-t)-f(x)\psi_{n}(x)+f(y)\psi_{n}(y)|^{p}}{|x-y|^{d+sp}}\,w(x)\,v(y)\,dx\,dy\\
 &+\int_{Q_{n}^{*}}\int_{\Omega\setminus Q_{n}^{**}}\frac{|f(x)\psi_{n}(x)-f(x-t)\psi_{n}(x-t)|^{p}}{|x-y|^{d+sp}}\,w(x)\,v(y)\,dx\,dy\\
 &+\int_{Q_{n}^{*}}\int_{\Omega\setminus Q_{n}^{**}}\frac{|f(x)\psi_{n}(x)-f(x-t)\psi_{n}(x-t)|^{p}}{|x-y|^{d+sp}}\,w(y)\,v(x)\,dx\,dy\\
 &=:I_{1}+I_{2}+I_{3}.
\end{align*}

The estimates of the integrals $I_{1}\,,I_{2}$ and $I_{3}$ and completely analogous to these from \cite[Proof of Theorem 12]{DK}. Notice that the properly modified version of \cite[Proposition 9]{MR4190640} also holds. The equality between $W_{0}^{s,p;\, w,v}(\Omega)$ and $W_{c}^{s,p;\, w,v}(\Omega)$ is a consequence of \cite[Proposition 2]{MR4190640} and the fact that the approximating functions are of the form $P^{\eta_{k}}f$.
    \end{proof}
    \begin{rem}
    Suppose that $\Omega$ is bounded. Then we trivially have 
    $$1\leq (1+|x|)^{d+sp}\leq M:= \displaystyle\sup_{x\in\Omega}(1+|x|)^{d+sp}<\infty,$$
    hence, the condition \eqref{wv} is equivalent to $w,v\in L^{1}(\Omega)$. Moreover, if $w(x)=d_{\Omega}(x)^{-\alpha},$ $v(x)=d_{\Omega}(x)^{-\beta}$, then \eqref{wv} is satisfied when $0\leq\alpha,\beta<d-\overline{\dim}_{M}(\partial\Omega)$ (we refer again to \cite{MR4144553}). Of course, the function $d_{\Omega}(x)^{-a}$ is locally comparable to a constant on $\Omega$ for every $a\in\R$.
    \end{rem}
 
    \begin{lem}\label{ind}
    Let $\Omega\subset\R^d$ be a nonempty, open set such that $|\Omega|<\infty$. Then we have
    $$
    W^{s,p;\,w,v}_{0}(\Omega)=W^{s,p;\,w,v}(\Omega)\Longleftrightarrow \ind_{
    \Omega}\in W^{s,p;\,w,v}_{0}(\Omega).
    $$
    \end{lem}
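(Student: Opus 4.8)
The plan is to prove the biconditional directly, with the forward implication being essentially trivial and the reverse implication being the substantive direction.

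\textbf{Forward implication.} Suppose $W^{s,p;\,w,v}_{0}(\Omega)=W^{s,p;\,w,v}(\Omega)$. Since $|\Omega|<\infty$, the constant function $\ind_{\Omega}$ lies in $L^{p}(\Omega)$, and its weighted Gagliardo seminorm vanishes identically (the integrand $|f(x)-f(y)|^{p}$ is zero), so $\ind_{\Omega}\in W^{s,p;\,w,v}(\Omega)$. By assumption this space equals $W^{s,p;\,w,v}_{0}(\Omega)$, hence $\ind_{\Omega}\in W^{s,p;\,w,v}_{0}(\Omega)$.

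\textbf{Reverse implication.} Assume $\ind_{\Omega}\in W^{s,p;\,w,v}_{0}(\Omega)$. The goal is to show every $f\in W^{s,p;\,w,v}(\Omega)$ can be approximated in the weighted norm by functions in $C_{c}^{\infty}(\Omega)\cap W^{s,p;\,w,v}(\Omega)$. First I would observe that, since $[\,\cdot\,]_{W^{s,p;\,w,v}(\Omega)}$ is a seminorm vanishing on constants and the norm adds $\|\cdot\|_{L^{p}(\Omega)}$, the space $W^{s,p;\,w,v}_{0}(\Omega)$ is a closed subspace of $W^{s,p;\,w,v}(\Omega)$ that is invariant under adding multiples of $\ind_{\Omega}$ (using the hypothesis). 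The key reduction: it suffices to approximate $f$ by \emph{bounded} functions in $W^{s,p;\,w,v}_{0}(\Omega)$, and then to approximate bounded functions by compactly supported ones. For the first step, truncate $f$ at level $N$, i.e. set $f_{N}=\max\{-N,\min\{N,f\}\}$; since truncation is $1$-Lipschitz, $[f_{N}]_{W^{s,p;\,w,v}(\Omega)}\le[f]_{W^{s,p;\,w,v}(\Omega)}$, and $f_{N}\to f$ in $L^{p}(\Omega)$ and in the weighted seminorm by dominated convergence (the integrand for $f_{N}$ is dominated by that for $f$ and converges pointwise a.e.). So I may assume $f$ is bounded, say $|f|\le N$.

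For a bounded $f$, write $f+N\ind_{\Omega}$, a nonnegative bounded function with the same weighted seminorm as $f$; since $\ind_{\Omega}\in W^{s,p;\,w,v}_{0}(\Omega)$, it is enough to approximate $g:=f+N\ind_{\Omega}$, i.e. I may further assume $0\le f\le 2N$. Now I would exhaust $\Omega$ by the sets where $f$ is "cut off near the boundary": fix the Whitney decomposition and let $\Omega^{(k)}$ be the union of Whitney cubes $Q_n$ with $l(Q_n)\ge 2^{-k}$, a relatively compact subset of $\Omega$, and set $g_{k}=g\cdot\ind_{\Omega^{(k)}}$, which is bounded, measurable, and compactly supported in $\Omega$, hence in $W^{s,p;\,w,v}_{c}(\Omega)$ provided its weighted seminorm is finite. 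The crucial estimate is that $[g-g_{k}]_{W^{s,p;\,w,v}(\Omega)}\to 0$: split the double integral according to whether each of $x,y$ lies in $\Omega^{(k)}$ or not, use $|g(x)-g_k(x)|\le |g(x)|\ind_{\Omega\setminus\Omega^{(k)}}(x)$ together with $(a+b)^p\lesssim a^p+b^p$, and dominate each resulting piece by the tail of the convergent integral defining $[g]_{W^{s,p;\,w,v}(\Omega)}$ plus a term controlled by $\|\ind_{\Omega\setminus\Omega^{(k)}}\cdots\|$, which goes to zero by dominated convergence since $|\Omega\setminus\Omega^{(k)}|\to 0$. Thus $g\in W^{s,p;\,w,v}_{c}(\Omega)$, and unwinding the reductions gives $f\in W^{s,p;\,w,v}_{c}(\Omega)$ for every $f\in W^{s,p;\,w,v}(\Omega)$, i.e. $W^{s,p;\,w,v}_{c}(\Omega)=W^{s,p;\,w,v}(\Omega)$.

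The last point is passing from $W^{s,p;\,w,v}_{c}$ to $W^{s,p;\,w,v}_{0}$: this requires knowing that compactly supported functions can be approximated by smooth compactly supported ones in the weighted norm. Here I would invoke the operator $P^{\eta}$ and the results quoted from \cite{MR4190640} (Propositions 1 and 2) together with Theorem \ref{gen}; note that $d_\Omega^{-a}$ and more generally the relevant weights are locally bounded, so the machinery applies on each relatively compact piece. Alternatively, and more cleanly, I would simply note that the identity $W^{s,p;\,w,v}_{0}(\Omega)=W^{s,p;\,w,v}_{c}(\Omega)$ under the standing hypotheses is exactly the content of (the relevant case of) Theorem \ref{gen}; combining it with $W^{s,p;\,w,v}_{c}(\Omega)=W^{s,p;\,w,v}(\Omega)$ proved above yields $W^{s,p;\,w,v}_{0}(\Omega)=W^{s,p;\,w,v}(\Omega)$, completing the equivalence.

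\textbf{Main obstacle.} I expect the delicate step to be the tail estimate $[g-g_{k}]_{W^{s,p;\,w,v}(\Omega)}\to 0$ for bounded $g$: the off-diagonal part of the double integral, where $x\in\Omega^{(k)}$ and $y\in\Omega\setminus\Omega^{(k)}$, is not simply a tail of the original convergent integral, so one must control $\int_{\Omega^{(k)}}\int_{\Omega\setminus\Omega^{(k)}}\frac{|g(x)-g(y)|^p}{|x-y|^{d+sp}}w(x)v(y)\,dy\,dx$ by a combination of the genuine tail plus a cross term that is handled by the boundedness of $g$ and the finiteness of $\int_{\Omega\setminus\Omega^{(k)}}\int_\Omega |x-y|^{-d-sp}w(x)v(y)$ — and it is precisely here that one uses that the full double integral of $g$ converges, so that dominated convergence applies to the indicator cutoff. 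The reductions to nonnegative bounded $f$ via the hypothesis $\ind_\Omega\in W^{s,p;\,w,v}_0(\Omega)$ are what make this cross term manageable.
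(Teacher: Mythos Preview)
There is a genuine gap in your reverse implication, precisely at the step you flag as the ``main obstacle''. The hard indicator cutoff $g_{k}=g\cdot\ind_{\Omega^{(k)}}$ does not work: indicators of nice subsets typically have \emph{infinite} fractional Gagliardo seminorm, so there is no reason for $g_{k}\in W^{s,p;\,w,v}(\Omega)$, let alone for $[g-g_{k}]\to 0$. Concretely, the off-diagonal contribution with $x\in\Omega^{(k)}$, $y\in\Omega\setminus\Omega^{(k)}$ gives $\int_{\Omega^{(k)}}\int_{\Omega\setminus\Omega^{(k)}}|g(y)|^{p}|x-y|^{-d-sp}w(x)v(y)\,dy\,dx$, and after using boundedness of $g$ you are left with $\int_{\Omega^{(k)}}\int_{\Omega\setminus\Omega^{(k)}}|x-y|^{-d-sp}w(x)v(y)\,dy\,dx$. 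This is essentially $[\ind_{\Omega^{(k)}}]^{p}$ and is in general \emph{infinite} (already in the unweighted case, for $\Omega$ a ball and $\Omega^{(k)}$ a concentric sub-ball). Your claim that this is ``handled by the finiteness of $\int_{\Omega\setminus\Omega^{(k)}}\int_\Omega |x-y|^{-d-sp}w(x)v(y)$'' is exactly what fails; there is no dominating function for dominated convergence here. Shifting $f$ to be nonnegative does nothing to resolve this.

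The paper's route (via \cite[Lemma 13]{DK}) uses the hypothesis in the essential way you are missing: take approximants $\varphi_{n}\in C_{c}^{\infty}(\Omega)$ of $\ind_{\Omega}$ furnished by the hypothesis (truncate to $[0,1]$ if necessary, which only decreases the seminorm and preserves compact support), so that $0\le\varphi_{n}\le 1$, $\varphi_{n}\to 1$ a.e.\ along a subsequence, and crucially $[\varphi_{n}]_{W^{s,p;\,w,v}(\Omega)}\to 0$. For bounded $f$ the product estimate $|f(1-\varphi_{n})(x)-f(1-\varphi_{n})(y)|\le|1-\varphi_{n}(x)||f(x)-f(y)|+|f(y)||\varphi_{n}(x)-\varphi_{n}(y)|$ yields $[f-f\varphi_{n}]^{p}\lesssim\int_{\Omega}\int_{\Omega}|1-\varphi_{n}(x)|^{p}\frac{|f(x)-f(y)|^{p}}{|x-y|^{d+sp}}w\,v\,dy\,dx+\|f\|_{\infty}^{p}[\varphi_{n}]^{p}$; the first term goes to $0$ by dominated convergence (dominant: the integrand of $[f]^{p}$) and the second by hypothesis. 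Hence $f\varphi_{n}\to f$, so $f\in W^{s,p;\,w,v}_{c}(\Omega)$, and then Theorem~\ref{gen} gives $W_{c}=W_{0}$ as you correctly note. The point is that the hypothesis supplies cutoffs with \emph{vanishing seminorm}, which is precisely what a hard indicator never has.
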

    \begin{proof}
    Using the result of Theorem \ref{gen} about the equality between $W^{s,p;\,w,v}_{0}(\Omega)$ and $W^{s,p;\,w,v}_{c}(\Omega)$, the proof is a copy of \cite[Lemma 13]{DK}.
    \end{proof}
    \begin{lem}\label{ineq}
     Let $\Omega$ be an open, uniform, bounded domain and let
\[
  v_{n}(x)= \max\left\{\min\left\{2 - nd_{\Omega}(x),1\right\},0\right\}=\left\{ \begin{array}{ll}
1 & \textrm{when $d_{\Omega}(x)\leq 1/n$,}\\
2 - nd_{\Omega}(x) & \textrm{when $ 1/n < d_{\Omega}(x)\leq2/n$,}\\
0 & \textrm{when $d_{\Omega}(x)>2/n$.}
  \end{array} \right.
  \]
  There exists a constant $C=C(d,s,p,\alpha,
  \beta,\Omega)>0$ such that the following inequality holds
  for all functions $f\in W^{s,p;\,\alpha,\beta}(\Omega)$ and $0\leq\alpha,\beta<\ucodim_{A}(\partial\Omega)$,
  \begin{equation}\label{eq:fvn}
    [fv_{n}]^{p}_{W^{s,p;\,\alpha,
    \beta}(\Omega)}
    \leq Cn^{sp}\int_{\Omega_{\frac{3}{n}}}\frac{|f(x)|^{p}}{d_{\Omega}(x)^{\alpha+\beta}}\,dx+C\int_{\Omega_{\frac{3}{n}}}\int_{\Omega_{\frac{3}{n}}}\frac{|f(x)-f(y)|^{p}}{|x-y|^{d+sp}}d_{\Omega}(x)^{-\alpha}d_{\Omega}(y)^{-\beta}\,dy\,dx.
\end{equation}
Moreover, without assuming the uniformity of $\Omega$, the following weaker inequality is satisfied for all $\al,\be\geq 0$, $\al+\be<d-\overline{\dim}_M(\partial\Omega)$ and $f\in L^{\infty}(\Omega)$, 
    \begin{equation}\label{eq:fvn2}
    [fv_{n}]^{p}_{W^{s,p;\,\alpha,
    \beta}(\Omega)}
    \leq C\|f\|^{p}_\infty n^{sp}\int_{\Omega_{\frac{3}{n}}}\frac{dx}{d_{\Omega}(x)^{\alpha+\beta}}+C\int_{\Omega_{\frac{3}{n}}}\int_{\Omega_{\frac{3}{n}}}\frac{|f(x)-f(y)|^{p}}{|x-y|^{d+sp}}d_{\Omega}(x)^{-\alpha}d_{\Omega}(y)^{-\beta}\,dy\,dx.
    \end{equation}
    \end{lem}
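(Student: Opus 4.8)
\textbf{Proof plan for Lemma \ref{ineq}.}

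The plan is to estimate $[fv_n]^p_{W^{s,p;\,\al,\be}(\Omega)}$ by splitting the double integral over $\Omega\times\Omega$ according to where the integration variables lie relative to the inner tubular neighbourhoods. Observe first that $v_n$ is supported in $\Omega_{2/n}$, is bounded by $1$, and is Lipschitz with $|v_n(x)-v_n(y)|\le n|x-y|$; moreover $v_n(x)-v_n(y)=0$ whenever both $x,y\notin\Omega_{2/n}$. Using the elementary pointwise bound
$$
|f(x)v_n(x)-f(y)v_n(y)|^p\lesssim |f(x)-f(y)|^p v_n(x)^p + |f(y)|^p |v_n(x)-v_n(y)|^p,
$$
(and its symmetric counterpart, together with estimate \eqref{2} to decouple the two power weights when convenient), the seminorm splits into a ``difference-of-$f$'' part and a ``difference-of-$v_n$'' part.

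For the difference-of-$f$ part, since $v_n$ is supported in $\Omega_{2/n}\subset\Omega_{3/n}$, the integrand vanishes unless one of the two points is in $\Omega_{2/n}$; by symmetry assume $x\in\Omega_{2/n}$. I would then split the $y$-integral into $y\in\Omega_{3/n}$ and $y\notin\Omega_{3/n}$. The first piece is directly bounded by the second term on the right-hand side of \eqref{eq:fvn}. For the second piece, $|x-y|\gtrsim d_\Omega(y)\gtrsim 1/n$ (or $|x-y|\gtrsim 1/n$ directly since $d_\Omega(x)\le 2/n$ and $d_\Omega(y)>3/n$), so the kernel is integrable in $y$ away from $x$; integrating out $y$ using the finiteness of the distance zeta function $\zeta_\Omega$ (valid because $\be<\ucodim_A(\partial\Omega)\le d-\overline{\dim}_M(\partial\Omega)$) produces a factor $\lesssim n^{sp}$, leaving $\int_{\Omega_{2/n}}|f(x)|^p d_\Omega(x)^{-\al-\be}\,dx$, which is dominated by the first term of \eqref{eq:fvn}.

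For the difference-of-$v_n$ part we must control $\int\int \frac{|f(y)|^p|v_n(x)-v_n(y)|^p}{|x-y|^{d+sp}}d_\Omega(x)^{-\al}d_\Omega(y)^{-\be}\,dx\,dy$; here $y$ ranges over $\Omega_{3/n}$ effectively (if $d_\Omega(y)>3/n$ and the integrand is nonzero then $d_\Omega(x)\le 2/n$ so $|x-y|\gtrsim 1/n$ and one argues as before), and for fixed such $y$ one splits the $x$-integral at $|x-y|\sim d_\Omega(y)$: on $|x-y|\le c\,d_\Omega(y)$ use the Lipschitz bound $|v_n(x)-v_n(y)|\le n|x-y|$ and the comparability $d_\Omega(x)\approx d_\Omega(y)$ to get $\int_{|x-y|\le c d_\Omega(y)}\frac{n^p|x-y|^p}{|x-y|^{d+sp}}\,dx\lesssim n^p d_\Omega(y)^{p-sp}$, which combined with $n^p d_\Omega(y)^{p}\lesssim 1$ on the support gives the factor $n^{sp}$; on $|x-y|>c\,d_\Omega(y)$ use $|v_n(x)-v_n(y)|\le 1$ and integrate the kernel, again picking up $n^{sp}$ from $d_\Omega(y)^{-sp}\le (c/n)^{-sp}$-type bounds, and then integrate $d_\Omega(x)^{-\al}$ using the Muckenhoupt property \eqref{muck2} (which holds precisely because $\al<\ucodim_A(\partial\Omega)$). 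The leftover integral is $n^{sp}\int_{\Omega_{3/n}}|f(y)|^p d_\Omega(y)^{-\al-\be}\,dy$, absorbed into the first term of \eqref{eq:fvn}. The main obstacle is the bookkeeping in this last part: one must use uniformity of $\Omega$ (via Theorem \ref{tw.comp}, whose truncated-seminorm comparability lets us restrict all the $x$-integrals to balls $B(y,\theta d_\Omega(y))$) to make the ``close'' regime the dominant one and avoid long-range terms that the power weight alone cannot handle. Finally, for the weaker inequality \eqref{eq:fvn2} we drop the uniformity assumption: then Theorem \ref{tw.comp} is unavailable, so instead we bound $|f|\le\|f\|_\infty$ throughout the difference-of-$v_n$ part, replace \eqref{2}-type decoupling by the crude estimate, and note that the only place where we genuinely needed to integrate a singular kernel near the diagonal was handled by the Lipschitz bound on $v_n$; the $\al+\be<d-\overline{\dim}_M(\partial\Omega)$ hypothesis is exactly what guarantees the relevant distance integrals converge, giving \eqref{eq:fvn2} with the constant $C\|f\|^p_\infty$ in front of the first term.
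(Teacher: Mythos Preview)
Your overall strategy is close to the paper's, but there is a genuine gap in the order of operations. You apply the pointwise product decomposition $|f(x)v_n(x)-f(y)v_n(y)|^p\lesssim |f(x)-f(y)|^p v_n(x)^p+|f(y)|^p|v_n(x)-v_n(y)|^p$ \emph{before} any localization, and then try to handle the ``difference-of-$f$'' piece when $x\in\Omega_{2/n}$ and $y\notin\Omega_{3/n}$. You claim that integrating out $y$ ``leaves $\int_{\Omega_{2/n}}|f(x)|^p d_{\Omega}(x)^{-\al-\be}\,dx$'', but the integrand there is $|f(x)-f(y)|^p$, not $|f(x)|^p$; integrating the kernel in $y$ does not collapse the difference to $|f(x)|^p$. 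In fact this cross term simply cannot be controlled by the right-hand side of \eqref{eq:fvn}, which involves only integrals over $\Omega_{3/n}$: take any $f$ supported in $\{d_\Omega>3/n\}$, so that the right-hand side of \eqref{eq:fvn} vanishes, yet your cross term $\int_{\Omega_{2/n}}\int_{\Omega\setminus\Omega_{3/n}}\frac{|f(x)-f(y)|^p}{|x-y|^{d+sp}}v_n(x)^p\,d_\Omega(x)^{-\al}d_\Omega(y)^{-\be}\,dy\,dx$ is typically positive. The same defect appears in your treatment of the ``difference-of-$v_n$'' piece for $y\notin\Omega_{3/n}$, where you would be left with $\int_{\Omega\setminus\Omega_{3/n}}|f(y)|^p(\cdots)\,dy$.

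The fix---and this is exactly how the paper proceeds---is to invoke Theorem~\ref{tw.comp} on $[fv_n]_{W^{s,p;\,\al,\be}(\Omega)}^p$ \emph{first}, reducing to the truncated form with $y\in B(x,\tfrac12 d_\Omega(x))$, and only \emph{then} split by regions and apply the product rule. After truncation the support condition $v_n(x)\neq 0\Rightarrow d_\Omega(x)\le 2/n$ forces $d_\Omega(y)\le\tfrac32 d_\Omega(x)\le 3/n$, so in the ``difference-of-$f$'' term both points lie in $\Omega_{3/n}$ automatically and the problematic cross piece is empty; the genuine cross pieces $J_2,J_3$ carry the \emph{undecomposed} integrand $|f(x)v_n(x)|^p$ (since $v_n(y)=0$ there), which is what you need. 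Your mention of Theorem~\ref{tw.comp} comes too late and is attached to the wrong term. For \eqref{eq:fvn2} the same correction applies: split by regions first so that cross pieces have integrand $|f(x)v_n(x)|^p$, and apply the product rule only when both points are in $\Omega_{3/n}$; the $L^\infty$ bound is then used solely in the $K_2$-type term, together with \eqref{2}, in place of the truncation.
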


    \begin{proof}
 The following proof is a modification of \cite[Lemma 10]{DK}. By Theorem \ref{tw.comp}, taking $\theta=\frac{1}{2}$ we have
    \begin{align*}
       [fv_{n}]^{p}_{W^{s,p;\,\alpha,\beta}(\Omega)}&\lesssim\int_{\Omega}\int_{B\left(x,\frac{1}{2}d_{\Omega}(x)\right)}\frac{|f(x)v_{n}(x)-f(y)v_{n}(y)|^{p}}{|x-y|^{d+sp}}d_{\Omega}(x)^{-\alpha}d_{\Omega}(y)^{-\beta}\,dy\,dx\\  
       &=\int_{\Omega_{\frac{3}{n}}}\int_{B\left(x,\frac{1}{2}d_{\Omega}(x)\right)\cap\Omega_{\frac{3}{n}}}\frac{|f(x)v_{n}(x)-f(y)v_{n}(y)|^{p}}{|x-y|^{d+sp}}d_{\Omega}(x)^{-\alpha}d_{\Omega}(y)^{-\beta}\,dy\,dx\\
       &+\int_{\Omega_{\frac{3}{n}}}\int_{B\left(x,\frac{1}{2}d_{\Omega}(x)\right)\cap\left(\Omega\setminus\Omega_{\frac{3}{n}}\right)}\frac{|f(x)v_{n}(x)|^{p}}{|x-y|^{d+sp}}d_{\Omega}(x)^{-\alpha}d_{\Omega}(y)^{-\beta}\,dy\,dx\\
       &+\int_{\Omega\setminus\Omega_{\frac{3}{n}}}\int_{B\left(x,\frac{1}{2}d_{\Omega}(x)\right)\cap\Omega_{\frac{3}{n}}}\frac{|f(y)v_{n}(y)|^{p}}{|x-y|^{d+sp}}d_{\Omega}(x)^{-\al}d_{\Omega}(y)^{-\be}\,dy\,dx\\
       &=:J_{1}+J_{2}+J_{3}. 
    \end{align*}
   Starting with estimating the integral $J_{1}$, we obtain
    \begin{align*}
    J_{1}&\lesssim\int_{\Omega_{\frac{3}{n}}}\int_{B\left(x,\frac{1}{2}d_{\Omega}(x)\right)\cap\Omega_{\frac{3}{n}}}\frac{|v_{n}(y)|^{p}|f(x)-f(y)|^{p}}{|x-y|^{d+sp}}d_{\Omega}(x)^{-\alpha}d_{\Omega}(y)^{-\beta}\,dy\,dx\\
    &+\int_{\Omega_{\frac{3}{n}}}\int_{B\left(x,\frac{1}{2}d_{\Omega}(x)\right)\cap\Omega_{\frac{3}{n}}}\frac{|f(x)|^{p}|v_{n}(x)-v_{n}(y)|^{p}}{|x-y|^{d+sp}}d_{\Omega}(x)^{-\alpha}d_{\Omega}(y)^{-\beta}\,dy\,dx\\
    &=:K_{1}+K_{2}.
    \end{align*}
    The integral $K_{1}$ can be trivially bounded from above by the remainder of the weighted Gagliardo seminorm, that is $$K_1\leq\int_{\Omega_{\frac{3}{n}}}\int_{\Omega_{\frac{3}{n}}}\frac{|f(x)-f(y)|^{p}}{|x-y|^{d+sp}}d_{\Omega}(x)^{-\alpha}d_{\Omega}(y)^{-\beta}\,dy\,dx.$$ 
    Moreover, using the bound $|v_{n}(x)-v_{n}(y)|\leq\min\{1,|x-y|\}$ and the fact that $d_{\Omega}(x)\approx d_{\Omega}(y)$ on the ball $B\left(x,\frac{1}{2}d_{\Omega}(x)\right)$ we can estimate $K_{2}$ as follows,
    \begin{align*}
    K_{2}&\lesssim\int_{\Omega_{\frac{3}{n}}}\int_{\Omega_{\frac{3}{n}}}\frac{|f(x)|^{p}\left(\min\{1,|x-y|\}\right)^{p}}{|x-y|^{d+sp}}d_{\Omega}(x)^{-\alpha-\beta}\,dy\,dx.    
    \end{align*}
    Splitting the inner integral over $dy$ into $|x-y|>1/n$ and $|x-y|\leq1/n$ gives the first term in (\ref{eq:fvn}).
    
    Going back to the integral $J_{2}$ and remembering that $v_{n}=0$ on $\Omega_{\frac{3}{n}}\setminus\Omega_{\frac{2}{n}}$ we have
    \begin{align*}
        J_{2}&=\int_{\Omega_{\frac{2}{n}}}\int_{B\left(x,\frac{1}{2}d_{\Omega}(x)\right)\cap\left(\Omega\setminus\Omega_{\frac{3}{n}}\right)}\frac{|f(x)v_{n}(x)|^{p}}{|x-y|^{d+sp}}d_{\Omega}(x)^{-\alpha}d_{\Omega}(y)^{-\beta}\,dy\,dx\\
        &\lesssim\int_{\Omega_{\frac{2}{n}}}\int_{B\left(x,\frac{1}{2}d_{\Omega}(x)\right)\cap\left(\Omega\setminus\Omega_{\frac{3}{n}}\right)}\frac{|f(x)v_{n}(x)|^{p}}{|x-y|^{d+sp}}d_{\Omega}(x)^{-\alpha-\beta}\,dy\,dx\\
        &\leq \int_{\Omega_{\frac{2}{n}}}\int_{\Omega\setminus\Omega_{\frac{3}{n}}}\frac{|f(x)v_{n}(x)|^{p}}{|x-y|^{d+sp}}d_{\Omega}(x)^{-\alpha-\beta}\,dy\,dx\\
        &\leq\int_{\Omega_{\frac{2}{n}}}|f(x)|^{p}d_{\Omega}(x)^{-\alpha-\beta}\,dx\int_{B(x,1/n)^{c}}\frac{\,dy}{|x-y|^{d+sp}}\\
        &\lesssim n^{sp}\int_{\Omega_{\frac{2}{n}}}|f(x)|^{p}d_{\Omega}(x)^{-\alpha-\beta}\,dx.\end{align*}
        
    The integral $J_{3}$ can be estimated in the similar way as $J_2$. That ends the proof of \eqref{eq:fvn}. We note that the proof of \eqref{eq:fvn2} is analogous to the previous part. $K_1$ estimates by \eqref{2} and in the integrals $J_2$ and $J_3$ we use the fact that $d_\Omega(y)\geq d_\Omega(x)$ for $y\notin \Omega_{\frac{3}{n}}$ and $x\in\Omega_{\frac{3}{n}}$, hence, the comparability is not necessary here. The only thing that essentially changes is the estimation of $K_2$. In this case we bound $|f(x)|$ from above by its $L^{\infty}$-norm, use \eqref{2} and then proceed similarly as before to obtain the desired result. That proves \eqref{eq:fvn2}.
    \end{proof}
    \section{Proof of the comparability}\label{section4}
    \begin{proof}[Proof of Theorem \ref{tw.comp}]
    In the proof of this Theorem we use techniques coming from \cite{MR3667439}. We start with fixing sufficiently fragmented Whitney decomposition $\mathcal{W}=\mathcal{W}(\theta)$, so that for $(x,y)\in Q\times 5Q$ it holds $y\in B(x,\theta\,d_{\Omega}(x))$. Suppose first that $p>1$. Let $q=\frac{p}{p-1}$ be the Hölder conjugate exponent to $p$. Using the duality between spaces $L^{p}(\Omega\times\Omega)$ and $L^{q}(\Omega\times\Omega)$ we can write the weighted Gagliardo seminorm wherewithal dual norm, that is
    \begin{align*}
    &\left(\int_{\Omega}\int_{\Omega}\frac{|f(x)-f(y)|^{p}}{|x-y|^{d+sp}}d_{\Omega}(y)^{-\beta}d_{\Omega}(x)^{-\alpha}\,dx\right)^{\frac{1}{p}}\\
   &=\sup \int_{\Omega}\int_{\Omega}\frac{|f(x)-f(y)|}{|x-y|^{\frac{d}{p}+s}}d_{\Omega}(x)^{-\frac{\alpha}{p}}d_{\Omega}(y)^{-\frac{\beta}{p}}g(x,y)\,dy\,dx,
    \end{align*}
    where the supremum is taken over all nonnegative $g\in L^{q}(\Omega\times\Omega)$ satisfying $\|g\|_{L^{q}(\Omega\times\Omega)}\leq 1$. For now on we fix such a function $g$. Now, we split the integration range as follows,
    \begin{align*}
    &\int_{\Omega}\int_{\Omega}=\sum_{Q}\int_{Q}\int_{2Q}+\sum_{Q,S}\int_{Q}\int_{S\setminus 2Q}=:S_1+S_2.
    \end{align*}

    Thanks to our assumption about the Whitney decomposition, the first sum can be immediately estimated by the truncated seminorm with making use of the Hölder inequality,
    \begin{align*}
    S_1&\leq\left(\sum_Q\int_Q\int_{2Q}\frac{|f(x)-f(y)|^{p}}{|x-y|^{d+sp}}d_\Omega(x)^{-\al}d_\Omega(y)^{-\be}\,dy\,dx\right)^{\frac{1}{p}}\|g\|_{L^{q}(\Omega\times\Omega)}\\
    &\leq\left(\sum_Q\int_Q\int_{2Q}\frac{|f(x)-f(y)|^{p}}{|x-y|^{d+sp}}d_\Omega(x)^{-\al}d_\Omega(y)^{-\be}\,dy\,dx\right)^{\frac{1}{p}}\\
    &\leq\left(\int_\Omega\int_{B(x,\theta d_\Omega(x))}\frac{|f(x)-f(y)|^{p}}{|x-y|^{d+sp}}d_\Omega(x)^{-\al}d_\Omega(y)^{-\be}\,dy\,dx\right)^{\frac{1}{p}}.
    \end{align*}
    
    Hence, we only need to estimate the second part, $S_2$. We denote by $f_{Q}$ the average value of $f$ on the cube $Q$, that is $f_{Q}=\frac{1}{|Q|}\int_{Q}f(x)\,dx$ (the latter is finite by assumption). Using similar arguments as in \cite[Section 4]{MR3667439} we observe that for $x\in Q$ and $y\in S\setminus 2Q$ it holds $|x-y|\approx D(Q,S)$, hence, triangle inequality yields
   \begin{align*}
    S_2&\lesssim \sum_Q\sum_S\int_Q\int_S\frac{|f(x)-f(y)|}{D(Q,S)^{\frac{d}{p}+s}}d_{\Omega}(x)^{-\frac{\alpha}{p}}d_{\Omega}(y)^{-\frac{\beta}{p}}g(x,y)\,dy\,dx\\
    &\leq\sum_Q\sum_S\int_Q\int_S\frac{|f(x)-f_Q|}{D(Q,S)^{d+sp}}d_{\Omega}(x)^{-\frac{\alpha}{p}}d_{\Omega}(y)^{-\frac{\beta}{p}}g(x,y)\,dy\,dx\\
    &+\sum_Q\sum_S\int_Q\int_S\frac{|f_Q-f_{Q_S}|}{D(Q,S)^{d+sp}}d_{\Omega}(x)^{-\frac{\alpha}{p}}d_{\Omega}(y)^{-\frac{\beta}{p}}g(x,y)\,dy\,dx\\
    &+\sum_Q\sum_S\int_Q\int_S\frac{|f_{Q_S}-f_{S}|}{D(Q,S)^{d+sp}}d_{\Omega}(x)^{-\frac{\alpha}{p}}d_{\Omega}(y)^{-\frac{\beta}{p}}g(x,y)\,dy\,dx\\
    &+\sum_Q\sum_S\int_Q\int_S\frac{|f_S-f(y)|}{D(Q,S)^{d+sp}}d_{\Omega}(x)^{-\frac{\alpha}{p}}d_{\Omega}(y)^{-\frac{\beta}{p}}g(x,y)\,dy\,dx\\
    &=:\boldsymbol{(A)}+\boldsymbol{(B)}+\boldsymbol{(C)}+\boldsymbol{(D)}.
   \end{align*} 
   Let us estimate $\boldsymbol{(A)}$ first. By Hölder inequality and Fubini-Tonelli theorem we get
   \begin{align*}
    \boldsymbol{(A)}&\leq\sum_{Q}\int_{Q}|f(x)-f_{Q}|d_{\Omega}(x)^{-\frac{\alpha}{p}}\left(\sum_{S}\int_{S}g(x,y)^{q}\,dy\right)^{\frac{1}{q}}\left(\sum_{S}\int_{S}\frac{d_\Omega(y)^{-\beta}}{D(Q,S)^{d+sp}}\right)^{\frac{1}{p}}\,dx\\
    &\leq\left(\sum_Q\int_Q|f(x)-f_Q|^{p}d_\Omega(x)^{-\al}\,\sum_{S}\int_{S}\frac{d_\Omega(y)^{-\beta}}{D(Q,S)^{d+sp}}\,dx\right)^\frac{1}{p}.
   \end{align*}
   By \cite[Lemma 2.7]{MR3667439} with $r=l(Q)$ and the Muckenhoupt condition \eqref{muck2}  we have
  \begin{align*}
   \sum_S\int_S\frac{d_\Omega(y)^{-\be}}{D(Q,S)^{d+sp}}\,dy&\lesssim l(Q)^{-sp}\inf_{y\in Q}Md_\Omega^{-\be}(y)\\
   &\lesssim l(Q)^{-sp}\inf_{y\in Q} d_\Omega(y)^{-\be}\\
   &\lesssim l(Q)^{-sp} d_\Omega(y)^{-\be}
  \end{align*}
 for any $y\in Q$, where $M$ is the Hardy-Littlewood maximal function. Hence, by Jensen inequality and Whitney decomposition properties, $\boldsymbol{(A)}$ can be bounded from above as follows,
    \begin{align*}
        \boldsymbol{(A)}^p&\lesssim\sum_Q\int_Q\frac{1}{|Q|}\int_Q|f(x)-f(y)|^{p}d_\Omega(x)^{-\al}d_\Omega(y)^{-\be}l(Q)^{-sp}\,dy\,dx\\
        &\lesssim\sum_Q\int_Q\int_Q|f(x)-f(y)|^{p}d_\Omega(x)^{-\al}d_\Omega(y)^{-\be}l(Q)^{-sp-d}\,dy\,dx\\
        &\lesssim\sum_Q\int_Q\int_Q\frac{|f(x)-f(y)|^{p}}{|x-y|^{d+sp}}d_\Omega(x)^{-\al}d_\Omega(y)^{-\be}\,dy\,dx\\
        &\leq \int_\Omega\int_{B\left(x,\theta d_\Omega(x)\right)}\frac{|f(x)-f(y)|^{p}}{|x-y|^{d+sp}}d_\Omega(x)^{-\al}d_\Omega(y)^{-\be}\,dy\,dx,
    \end{align*}
  as it holds $|x-y|\lesssim l(Q)$ for $x,y\in Q$.
   
Now, we face the estimation of the component $\boldsymbol{(B)}$. We denote by $\mathcal{N}(P)$ the successor of the cube $P$ in the chain $[Q,S]$. It holds $\mathcal{N}(P)\subset 5P$ and $Q\in \Sh(P)$ for $P\in[Q,Q_S]$. Also, $D(Q,S)\approx D(P,S)$ for such $P$. Hence, analogously to \cite{MR3667439}, by triangle inequality and Jensen inequality we can estimate $\boldsymbol{(B)}$ as follows,

  \begin{align*}
    \boldsymbol{(B)}&\leq\sum_{Q,S}\int_{Q}\int_{S}\frac{d_{\Omega}(x)^{-\frac{\alpha}{p}}d_{\Omega}(y)^{-\frac{\beta}{p}}}{D(Q,S)^{\frac{d}{q}+s}}g(x,y)\sum_{P\in[Q,Q_{S})}|f_{P}-f_{\mathcal{N}(P)}|\,dy\,dx\\
    &\leq\sum_{Q,S}\int_{Q}\int_{S}\sum_{P\in[Q,Q_{S})}\frac{1}{|P|}\frac{1}{|\mathcal{N}(P)|}\int_{P}\int_{\mathcal{N}(P)}|f(\xi)-f(\zeta)|\,d\xi\,d\zeta\frac{d_{\Omega}(x)^{-\frac{\alpha}{p}}d_{\Omega}(y)^{-\frac{\beta}{p}}}{D(Q,S)^{\frac{d}{p}+s}}g(x,y)\,dy\,dx\\
    &\lesssim\sum_{P}\frac{1}{|P||5P|}\int_{P}\int_{5P}|f(\xi)-f(\zeta)|\,d\xi\,d\zeta\sum_{Q\in \Sh(P)}\sum_{S}\int_{Q}\int_{S}\frac{d_{\Omega}(x)^{-\frac{\alpha}{p}}d_{\Omega}(y)^{-\frac{\beta}{p}}}{D(P,S)^{\frac{d}{q}+s}}g(x,y)\,dy\,dx.
    \end{align*}
Define $$G(x)=\left(\int_{\Omega}g(x,y)^{q}\,dy\right)^{\frac{1}{q}},\,x\in\Omega.$$ 
    Using again Hölder inequality, Muckenhoupt condition (\ref{muck2}) and Whitney covering properties we have
\begin{align*}
    &\sum_{Q\in \Sh(P)}\sum_{S}\int_{Q}\int_{S}\frac{d_{\Omega}(x)^{-\frac{\alpha}{p}}d_{\Omega}(y)^{-\frac{\beta}{p}}}{D(P,S)^{\frac{d}{p}+s}}g(x,y)\,dy\,dx\\   
   &\leq\sum_{Q\in\Sh(P)}\int_{Q}d_{\Omega}(x)^{-\frac{\alpha}{p}}\left(\sum_{S}\int_S\frac{d_\Omega(y)^{-\beta}}{D(P,S)^{d+sp}}\right)^{\frac{1}{p}}G(x)\,dx\\
   &\lesssim l(P)^{-\frac{\beta}{p}-s}\int_{\SH(P)}G(x)d_{\Omega}(x)^{-\frac{\alpha}{p}}\,dx.\\
\end{align*}
Let us take small $\varepsilon>0$, to be established in a moment. We apply Hölder inequality with exponents $q-\varepsilon$ and $\frac{q-\varepsilon}{q-\varepsilon-1}$ to the integral above to obtain
\begin{align*}
\int_{\SH(P)}G(x)d_{\Omega}(x)^{-\frac{\alpha}{p}}\,dx&\leq \left(\int_{\SH(P)}G(x)^{q-\varepsilon}(x)\,dx\right)^{\frac{1}{q-\varepsilon}}\left(\int_{\SH(P)}d_\Omega(x)^{-\frac{\alpha(q-\varepsilon)}{p(q-\varepsilon-1)}}\,dx\right)^{\frac{q-\varepsilon-1}{q-\varepsilon}}.     
\end{align*}
Notice that $$\lim_{\varepsilon\rightarrow 0^{+}}\frac{q-\varepsilon}{p(q-\varepsilon-1)}=\frac{q}{p(q-1)}=1,$$
hence, remembering that by assumption $0\leq\al<\ucodim_A(\partial\Omega)$, for sufficiently small $\varepsilon$ we still have $0\leq\frac{\alpha(q-\varepsilon)}{p(q-\varepsilon-1)}<\ucodim_A(\partial\Omega)$ (this condition defines $\varepsilon$, as well as $q-\varepsilon>1$). According to this, by \cite[Lemma 2.7]{MR3667439} and \eqref{muck2}, we have 
\begin{align*}
&\left(\int_{\SH(P)}G(x)^{q-\varepsilon}(x)\,dx\right)^{\frac{1}{q-\varepsilon}}\left(\int_{\SH(P)}d_\Omega(x)^{-\frac{\alpha(q-\varepsilon)}{p(q-\varepsilon-1)}}\,dx\right)^{\frac{q-\varepsilon-1}{q-\varepsilon}}\\
&\lesssim\left(l(P)^d\inf_{x\in P} MG^{q-\varepsilon}(x)\right)^\frac{1}{q-\varepsilon}\left(l(P)^{d}\inf_{x\in P}Md_\Omega^{-\frac{\alpha(q-\varepsilon)}{p(q-\varepsilon-1)}}(x)\right)^{\frac{q-\varepsilon-1}{q-\varepsilon}}\\
&\lesssim\left(l(P)^d\inf_{x\in P} MG^{q-\varepsilon}(x)\right)^\frac{1}{q-\varepsilon}\left(l(P)^{d-\frac{\alpha(q-\varepsilon)}{p(q-\varepsilon-1)}}\right)^{\frac{q-\varepsilon-1}{q-\varepsilon}}\\
&\leq l(P)^{d-\frac{\al}{p}}\left(MG^{q-\varepsilon}(\zeta)\right)^{\frac{1}{q-\varepsilon}}
\end{align*}
for any $\zeta\in P$. Finally, summing up all the considerations above, by Jensen inequality, Hölder inequality and boundendess of the Hardy-Littlewood maximal function on $L^{\frac{q}{q-\varepsilon}}(\R^d)$ we get the following result,  
\begin{align*}
\boldsymbol{(B)}&\lesssim\sum_{P}\frac{l(P)^{d-\frac{\alpha}{p}-\frac{\beta}{p}-s}}{|P||5P|}\int_{P}\int_{5P}|f(\xi)-f(\zeta)|\left(MG^{q-\varepsilon}(\zeta)\right)^{\frac{1}{q-\varepsilon}}\,d\xi\,d\zeta\\
&=\sum_{P}\frac{l(P)^{-\frac{\alpha}{p}-\frac{\beta}{p}-s}}{|5P|}\int_{P}\int_{5P}|f(\xi)-f(\zeta)|\left(MG^{q-\varepsilon}(\zeta)\right)^{\frac{1}{q-\varepsilon}}\,d\xi\,d\zeta\\ 
&\leq\sum_P l(P)^{-\frac{\alpha}{p}-\frac{\beta}{p}-s}\left(\int_P\left(\frac{1}{|5P|}\int_{5P}|f(\xi)-f(\zeta)|\,d\xi\right)^{p}\,d\zeta\right)^{\frac{1}{p}} \left(\int_P \left(MG^{q-\varepsilon}(\zeta)\right)^{\frac{q}{q-\varepsilon}}\,d\zeta\right)^{\frac{1}{q}}\\
&\leq\sum_P l(P)^{-\frac{\alpha}{p}-\frac{\beta}{p}-s}\left(\int_P\frac{1}{|5P|}\int_{5P}|f(\xi)-f(\zeta)|^{p}\,d\xi\,d\zeta\right)^{\frac{1}{p}} \left(\int_P \left(MG^{q-\varepsilon}(\zeta)\right)^{\frac{q}{q-\varepsilon}}\,d\zeta\right)^{\frac{1}{q}}\\ 
&\lesssim\left(\sum_P\int_P\int_{5P}\frac{|f(\xi)-f(\zeta)|^{p}}{|\xi-\zeta|^{d+sp}}d_\Omega(\zeta)^{-\al}d_\Omega(\xi)^{-\be}\right)^{\frac{1}{p}}\left(\int_{\Omega}\left(MG^{q-\varepsilon}(\zeta)\right)^{\frac{q}{q-\varepsilon}}\,d\zeta\right)^{\frac{1}{q}}\\
&\lesssim\left(\sum_P\int_P\int_{5P}\frac{|f(\xi)-f(\zeta)|^{p}}{|\xi-\zeta|^{d+sp}}d_\Omega(\zeta)^{-\al}d_\Omega(\xi)^{-\be}\right)^{\frac{1}{p}}\left(\int_{\Omega}G^{q}(\zeta)\,d\zeta\right)^{\frac{1}{q}}\\
&\lesssim\left(\sum_P\int_P\int_{5P}\frac{|f(\xi)-f(\zeta)|^{p}}{|\xi-\zeta|^{d+sp}}d_\Omega(\zeta)^{-\al}d_\Omega(\xi)^{-\be}\,d\xi\,d\zeta\right)^{\frac{1}{p}}\\
&\lesssim\left(\int_\Omega\int_{B(x,\theta d_\Omega(x))}\frac{|f(\xi)-f(\zeta)|^{p}}{|\xi-\zeta|^{d+sp}}d_\Omega(\zeta)^{-\al}d_\Omega(\xi)^{-\be}\,d\xi\,d\zeta\right)^{\frac{1}{p}}.
\end{align*}
That ends $\boldsymbol{(B)}$. We observe that the case $\boldsymbol{(C)}$ is symmetric to $\boldsymbol{(B)}$ (as we may have $Q_S=S_Q$). We will obtain the same estimate as in $\boldsymbol{(B)}$, but with $\al$ and $\be$ changed, however it holds $d_{\Omega}(x)\approx d_\Omega(y)$ for $x,y\in 5P$, hence, we will obtain exactly the same bound. The case $\boldsymbol{(D)}$ is symmetric to $\boldsymbol{(A)}$. That ends the proof in the case $p>1$. When $p=1$, we proceed similarly and actually this case is simpler and does not require the usage of dual norms.\qedhere
 \end{proof}
 When $p=1$, we can formulate even a more general result.
 \begin{thm}\label{tw.comp2}
 Let $\Omega$ be a nonempty, bounded, uniform domain and $0<s<1$, $0<\theta\leq 1$. If the weights $w,v$ belong to the Muckenhoupt class $A_1$, then the full seminorm $[f]_{W^{s,1; w\,v}(\Omega)}$ and the truncated seminorm
 $$
 \int_\Omega\int_{B\left(x,\theta d_\Omega(x)\right)}\frac{|f(x)-f(y)|}{|x-y|^{d+s}}\left(w(x)v(y)+w(y)v(x)\right)\,dy\,dx
 $$
 are comparable for all $f\in L^{1}_{loc}(\Omega)$. The comparability constant depends on $\Omega,s,d,w,v$ and $\theta$.
 \end{thm}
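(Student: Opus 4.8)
The plan is to repeat the argument of the proof of Theorem~\ref{tw.comp} in the case $p=1$ -- where the passage to the dual norm is not needed -- and to replace each use of the power‑weight bound \eqref{muck2} by the general $A_1$ estimate \eqref{Mw}, i.e.\ $Mw\lesssim w$ and $Mv\lesssim v$. First I would fix, exactly as there, a sufficiently fragmented admissible Whitney decomposition $\mathcal W=\mathcal W(\theta)$ of $\Omega$ so that $(x,y)\in Q\times 5Q$ implies $y\in B(x,\theta d_\Omega(x))$, and split
\[
\int_\Omega\!\int_\Omega \frac{|f(x)-f(y)|}{|x-y|^{d+s}}\,w(y)v(x)\,dy\,dx=\sum_Q\int_Q\!\int_{2Q}+\sum_{Q,S}\int_Q\!\int_{S\setminus 2Q}=:S_1+S_2 .
\]
Since $Q\times 2Q\subset Q\times 5Q$, the sum $S_1$ is at once dominated by the one‑sided truncated seminorm with weight $w(y)v(x)$, hence by the symmetrized truncated seminorm in the statement.

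For $S_2$ I would keep the telescoping of Theorem~\ref{tw.comp}: on $Q\times(S\setminus 2Q)$ one has $|x-y|\approx D(Q,S)$, and writing $f(x)-f(y)=(f(x)-f_Q)+(f_Q-f_{Q_S})+(f_{Q_S}-f_S)+(f_S-f(y))$ with the middle two differences telescoped along the admissible chain $[Q,S]$ through its central cube $Q_S$ gives $S_2\lesssim\boldsymbol{(A)}+\boldsymbol{(B)}+\boldsymbol{(C)}+\boldsymbol{(D)}$, now without the auxiliary function $g$. For $\boldsymbol{(A)}$, summing first in $S$ and using \cite[Lemma~2.7]{MR3667439} with $r=l(Q)$ together with $Mw\lesssim w$ gives $\sum_S\int_S D(Q,S)^{-d-s}w(y)\,dy\lesssim l(Q)^{-s}\inf_Q w$; combining this with $|f(x)-f_Q|\le\frac1{|Q|}\int_Q|f(x)-f(\zeta)|\,d\zeta$, with $\inf_Q w\le w(\zeta)$ for $\zeta\in Q$, with $l(Q)^{-s}|Q|^{-1}\approx l(Q)^{-d-s}\lesssim|x-\zeta|^{-d-s}$ on $Q\times Q$, and with the disjointness of the Whitney cubes, one gets $\boldsymbol{(A)}\lesssim\int_\Omega\int_{B(x,\theta d_\Omega(x))}|x-y|^{-d-s}|f(x)-f(y)|w(y)v(x)\,dy\,dx$. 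The term $\boldsymbol{(D)}$ is symmetric to $\boldsymbol{(A)}$ under exchanging the two integration variables, but now $w$ ends up on the \emph{outer} variable, so one obtains instead $\boldsymbol{(D)}\lesssim\int_\Omega\int_{B(x,\theta d_\Omega(x))}|x-y|^{-d-s}|f(x)-f(y)|w(x)v(y)\,dy\,dx$. This mismatch is exactly why the truncated seminorm must carry the symmetric combination $w(x)v(y)+w(y)v(x)$: the full seminorm itself equals $\tfrac12\int_\Omega\int_\Omega|x-y|^{-d-s}|f(x)-f(y)|(w(x)v(y)+w(y)v(x))$ by relabelling, but $B(x,\theta d_\Omega(x))$ is not symmetric and, unlike a power of $d_\Omega$, a general $A_1$ weight need not satisfy $w(x)\approx w(y)$ on it.

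For $\boldsymbol{(B)}$ I would interchange the order of summation: for a fixed $P$ the cubes $Q$ with $P\in[Q,Q_S)$ satisfy $Q\in\Sh(P)$ and $D(Q,S)\approx D(P,S)$, so
\[
\boldsymbol{(B)}\lesssim\sum_P|f_P-f_{\mathcal N(P)}|\Big(\int_{\SH(P)}v(x)\,dx\Big)\Big(\sum_S\int_S\frac{w(y)}{D(P,S)^{d+s}}\,dy\Big),
\]
and, using $\SH(P)\subset B(x_P,\rho\,l(P))$ together with \cite[Lemma~2.7]{MR3667439}, $Mv\lesssim v$ and $Mw\lesssim w$, the two factors are $\lesssim l(P)^d\inf_P v$ and $\lesssim l(P)^{-s}\inf_P w$. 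With $|f_P-f_{\mathcal N(P)}|\lesssim|P|^{-1}|5P|^{-1}\int_P\int_{5P}|f(\zeta)-f(\xi)|\,d\xi\,d\zeta$ (here $\zeta\in P$, $\xi\in\mathcal N(P)\subset 5P$), $l(P)^{d-s}|P|^{-1}|5P|^{-1}\approx l(P)^{-d-s}\lesssim|\zeta-\xi|^{-d-s}$ on $P\times 5P$, $\inf_P w\le w(\zeta)$ for $\zeta\in P$, and the substitute for the pointwise comparability that a power weight enjoys but a general $A_1$ weight does not,
\[
\inf_P v\ \le\ \frac1{|P|}\int_P v\ =\ 5^d\,\frac1{|5P|}\int_P v\ \le\ 5^d\,\frac1{|5P|}\int_{5P}v\ \lesssim\ \inf_{5P}v\ \le\ v(\xi)\quad(\xi\in 5P),
\]
which is just the $A_1$ condition for the cube $5P$, one gets $\boldsymbol{(B)}\lesssim\sum_P\int_P\int_{5P}|\zeta-\xi|^{-d-s}|f(\zeta)-f(\xi)|w(\zeta)v(\xi)\,d\xi\,d\zeta$; since the $P$'s have disjoint interiors and $(\zeta,\xi)\in P\times 5P$ forces $\xi\in B(\zeta,\theta d_\Omega(\zeta))$, this last sum is at most $\int_\Omega\int_{B(x,\theta d_\Omega(x))}|x-y|^{-d-s}|f(x)-f(y)|w(x)v(y)\,dy\,dx$. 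The term $\boldsymbol{(C)}$ is symmetric to $\boldsymbol{(B)}$ (exchange $Q\leftrightarrow S$ and $w\leftrightarrow v$) and reduces to the same bound. Adding up $S_1,\boldsymbol{(A)},\boldsymbol{(B)},\boldsymbol{(C)},\boldsymbol{(D)}$ bounds the full seminorm by the truncated one; the reverse inequality is trivial.

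The main obstacle -- the only genuinely new point compared with the power‑weight case of Theorem~\ref{tw.comp} -- is precisely that a general $A_1$ weight is not comparable to a constant on a Whitney cube or on its dilate, so one cannot swap $w$ and $v$ freely inside the localized integrals. This forces the two modifications above: the truncated seminorm must be symmetrized (because $\boldsymbol{(A)}$ and $\boldsymbol{(D)}$ inevitably fall on opposite pieces), and the passage from an infimum over a cube to pointwise values on the enlarged cube must go through the $A_1$ inequality for $5P$ rather than through a trivial estimate. All the remaining steps are a routine $p=1$ specialization of the proof of Theorem~\ref{tw.comp}, with \eqref{Mw} used in place of \eqref{muck2}; the comparability constant ends up depending only on $\Omega,d,s,\theta$ and the $A_1$ constants of $w,v$.
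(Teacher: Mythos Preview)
Your argument is correct and follows essentially the same route as the paper's own (very terse) proof: specialize the $p=1$ case of Theorem~\ref{tw.comp} and replace every appeal to \eqref{muck2} by the general $A_1$ bound \eqref{Mw}. There is one small discrepancy in the \emph{explanation}, though not in the proof itself. The paper locates the need for the symmetrized truncated seminorm in the pair $\boldsymbol{(B)}/\boldsymbol{(C)}$: in the power--weight proof of Theorem~\ref{tw.comp}, $\boldsymbol{(C)}$ yields the same estimate as $\boldsymbol{(B)}$ but with the roles of $\alpha$ and $\beta$ swapped, and it is only the comparability $d_\Omega(\xi)\approx d_\Omega(\zeta)$ on $5P$ that merges the two --- precisely the step that fails for a general $A_1$ weight. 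Your terms $\boldsymbol{(A)}$ and $\boldsymbol{(D)}$, by contrast, end up on the \emph{symmetric} sets $Q\times Q$ and $S\times S$, so a relabelling of the two integration variables would let you place either of them on whichever one--sided piece you like; the ``mismatch'' you describe between $\boldsymbol{(A)}$ and $\boldsymbol{(D)}$ is therefore an artefact of your particular bookkeeping rather than an intrinsic obstruction. This does not affect the validity of your bounds, and your device $\inf_P v\le 5^d\,|5P|^{-1}\int_{5P}v\lesssim \inf_{5P}v\le v(\xi)$ for transferring the infimum to the enlarged cube via the $A_1$ condition is exactly the right substitute for the pointwise comparability used in the power--weight case.
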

 \begin{proof}
The proof is similar to the proof of Theorem \ref{tw.comp}. The additional term in the truncated seminorm above follows from the fact, that components $\boldsymbol{(B)}$ and $\boldsymbol{(C)}$ are symmetric with respect to $w$ and $v$, but we cannot use the comparability of $w(x)$ and $v(y)$ on the cube $5P$, as for the distance weights.       
 \end{proof}
 \begin{rem}
 Interestingly, the result of Theorem \ref{tw.comp} allow to deduce in some cases another comparability property, between weighted Gagliardo seminorms $[f]_{W^{s,p;\,\al\,\be}(\Omega)}$ and $[f]_{W^{s,p;\,\al+\be,0}(\Omega)}$. Suppose that $\Omega$ is a nonempty, bounded, uniform domain and the parameters $\al,\be$ satisfy $0\leq\al,\be,\al+\be<\ucodim_A(\partial\Omega)$. Take $f\in L^{1}_{loc}(\Omega)$. By \eqref{2} we have 
 $$
 [f]_{W^{s,p;\,\al\,\be}(\Omega)}\leq 2^{\frac{1}{p}}[f]_{W^{s,p;\,\al+\be,0}(\Omega)}.
 $$
 To obtain a converse inequality, we use Theorem \ref{tw.comp} with $\theta=\frac{1}{2}$ and get
 \begin{align*}
  [f]^{p}_{W^{s,p;\,\al\,\be}(\Omega)}&\geq\int_\Omega\int_{B\left(x,\frac{1}{2}d_\Omega(x)\right)}\frac{|f(x)-f(y)|^{p}}{|x-y|^{d+sp}}d_\Omega(y)^{-\be}d_\Omega(x)^{-\al}\,dy\,dx\\
  &\approx \int_\Omega\int_{B\left(x,\frac{1}{2}d_\Omega(x)\right)}\frac{|f(x)-f(y)|^{p}}{|x-y|^{d+sp}}d_\Omega(y)^{-\al-\be}\,dy\,dx\\
  &\gtrsim [f]^{p}_{W^{s,p;\,\al+\be,0}(\Omega)}.
 \end{align*}
  Overall, we indeed get that 
 $$
 [f]_{W^{s,p;\,\al\,\be}(\Omega)}\approx [f]_{W^{s,p;\,\al+\be,0}(\Omega)}.
 $$
 \end{rem}
         \section{Proofs of main results}\label{sectionmainresults}
       Before we proceed to prove our main results, we need the following Proposition.
       \begin{prop}\label{refl}
       Let $\Omega$ be a nonempty open set. Then the space $W^{s,p;\,w,\, v}(\Omega)$ is reflexive for $0<s<1$, $1<p<\infty$ and all weights $w$ and $v$.
       \end{prop}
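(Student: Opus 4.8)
The plan is to exhibit $W^{s,p;\,w,v}(\Omega)$ as a Banach space isomorphic to a closed subspace of a reflexive Banach space, and then invoke the standard facts that $L^{p}$ is reflexive for $1<p<\infty$, that finite products of reflexive spaces are reflexive, that closed subspaces of reflexive spaces are reflexive, and that a Banach space isomorphic to a reflexive one is itself reflexive.

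First I would introduce the Borel measure $\mu$ on $\Omega\times\Omega$ with density $(x,y)\mapsto\frac{w(y)v(x)}{|x-y|^{d+sp}}$ with respect to Lebesgue measure, and the product space $E:=L^{p}(\Omega)\times L^{p}(\Omega\times\Omega,\mu)$ normed by $\|(g,h)\|_{E}=\bigl(\|g\|_{L^{p}(\Omega)}^{p}+\|h\|_{L^{p}(\Omega\times\Omega,\mu)}^{p}\bigr)^{1/p}$. Define $T\colon W^{s,p;\,w,v}(\Omega)\to E$ by $Tf=\bigl(f,\,(x,y)\mapsto f(x)-f(y)\bigr)$; this is linear and well defined, since the second component lies in $L^{p}(\Omega\times\Omega,\mu)$ precisely when $[f]_{W^{s,p;\,w,v}(\Omega)}<\infty$. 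One has $\|Tf\|_{E}^{p}=\|f\|_{L^{p}(\Omega)}^{p}+[f]_{W^{s,p;\,w,v}(\Omega)}^{p}$, hence $\|Tf\|_{E}\le\|f\|_{W^{s,p;\,w,v}(\Omega)}\le 2^{1-1/p}\|Tf\|_{E}$, so $T$ is an injective bounded linear map with bounded inverse on its range, i.e. a Banach-space isomorphism onto $T\bigl(W^{s,p;\,w,v}(\Omega)\bigr)$.

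The key step is to verify that the range of $T$ is closed in $E$. If $Tf_{n}\to(g,h)$ in $E$, then $f_{n}\to g$ in $L^{p}(\Omega)$ and $(x,y)\mapsto f_{n}(x)-f_{n}(y)\to h$ in $L^{p}(\Omega\times\Omega,\mu)$. Passing to a subsequence, $f_{n}\to g$ Lebesgue-a.e.\ on $\Omega$, so $f_{n}(x)-f_{n}(y)\to g(x)-g(y)$ for Lebesgue-a.e.\ $(x,y)\in\Omega\times\Omega$, hence for $\mu$-a.e.\ $(x,y)$, because $\mu$ is absolutely continuous with respect to Lebesgue measure on $\Omega\times\Omega$. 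Passing to a further subsequence we also have $f_{n}(x)-f_{n}(y)\to h(x,y)$ for $\mu$-a.e.\ $(x,y)$; therefore $h(x,y)=g(x)-g(y)$ for $\mu$-a.e.\ $(x,y)$, which gives $[g]_{W^{s,p;\,w,v}(\Omega)}=\|h\|_{L^{p}(\Omega\times\Omega,\mu)}<\infty$, i.e. $g\in W^{s,p;\,w,v}(\Omega)$ and $(g,h)=Tg\in T\bigl(W^{s,p;\,w,v}(\Omega)\bigr)$. Thus the range of $T$ is closed; incidentally this also re-proves that $W^{s,p;\,w,v}(\Omega)$ is complete, being isomorphic to a closed subspace of the Banach space $E$.

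To conclude, for $1<p<\infty$ both $L^{p}(\Omega)$ and $L^{p}(\Omega\times\Omega,\mu)$ are reflexive, hence so is $E$; its closed subspace $T\bigl(W^{s,p;\,w,v}(\Omega)\bigr)$ is then reflexive, and therefore $W^{s,p;\,w,v}(\Omega)$, being Banach-isomorphic to it, is reflexive as well. Note that no hypothesis on the weights $w,v$ nor boundedness of $\Omega$ is needed. I expect the only genuinely delicate point to be the closedness of the range of $T$ — specifically the transfer of almost-everywhere convergence from Lebesgue measure to $\mu$ — which, however, requires nothing beyond $\mu\ll\mathrm{Leb}$.
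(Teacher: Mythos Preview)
Your proof is correct and follows essentially the same route as the paper's: embed $W^{s,p;\,w,v}(\Omega)$ linearly and isometrically (up to an equivalent norm in your version) into a product of two $L^{p}$-spaces via $f\mapsto\bigl(f,\,(x,y)\mapsto f(x)-f(y)\bigr)$, and then invoke reflexivity of $L^{p}$ for $1<p<\infty$ together with the fact that closed subspaces of reflexive spaces are reflexive. The paper's proof is terser---it absorbs the kernel $|x-y|^{-d/p-s}$ into the second component rather than into the measure, cites Brezis for the $W^{1,p}$ analogue, and does not spell out the closedness of the range---whereas you supply that verification explicitly.
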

       \begin{proof}
       The proof is a modification of the proof of the reflexivity of the classical Sobolev space $W^{1,p}(\Omega)$ from \cite[Proposition 8.1]{MR2759829}. We define the isometry $T\colon W^{s,p;\,w,\, v}(\Omega)\to L^{p}(\Omega)\times L^{p}(\Omega\times\Omega,w\times v)$ (the latter endowed with the natural product norm) by
       $$
       T(u)=\left(u,\frac{u(x)-u(y)}{|x-y|^{\frac{d}{p}+s}}\right).
       $$
       The reflexitiy of $W^{s,p;\,w,\, v}(\Omega)$ is a consequence of reflexivity of $L^{p}(\Omega)\times L^{p}(\Omega\times\Omega,w\times v)$.
       \end{proof}
       \begin{proof}[Proof of Theorem~\ref{tw1}, case I]
       By Lemma \ref{ind} and Theorem \ref{gen} to prove the density of $C_{c}^{\infty}(\Omega)$ in $W^{s,p;\,\alpha,\beta}(\Omega)$ it suffices to approximate the function $f=\ind_{\Omega}$ by functions with compact support. By \eqref{eq:fvn2} (keeping the same notation), we have
       \begin{align*}
       [fv_{n}]^{p}_{W^{s,p;\,\alpha,
    \beta}(\Omega)}
    \leq Cn^{sp}\int_{\Omega_{\frac{3}{n}}}\frac{dx}{d_{\Omega}(x)^{\alpha+\beta}}   .
       \end{align*}
       
       We have
       \begin{align*}
        n^{sp}\int_{\Omega_{\frac{3}{n}}}d_{\Omega}(x)^{-\alpha-\beta}\,dx&\lesssim \int_{\Omega_{\frac{3}{n}}}d_\Omega(x)^{-\al-\be-sp}\,dx\longrightarrow 0,
       \end{align*}
       when $n\longrightarrow\infty$, because $\int_{\Omega}d_\Omega(x)^{-\al-\be-sp}\,dx=\zeta_\Omega(\al+\be+sp)<\infty$.
       \end{proof}
       \begin{proof}[Proof of Theorem~\ref{tw1}, case II]
       Recall that in this case we assume that $\Omega$ is $(d-sp-\alpha-\beta)$-homogeneous.  Define the layers $\Omega_{i,n}=\left\{x\in\Omega:\frac{3}{2^{i+1}n}<d_{\Omega}(x)\leq\frac{3}{2^{i}n}\right\}.$ We observe that
       \begin{align*}
        n^{sp}\int_{\Omega_{\frac{3}{n}}}d_{\Omega}(x)^{-\alpha-\beta}\,dx&=n^{sp}\sum_{i=0}^{\infty}\int_{\Omega_{i,n}}d_{\Omega}(x)^{-\alpha-\beta}\,dx\\
        &\approx n^{sp+\alpha+\beta}\sum_{i=0}^{\infty}2^{-i(\alpha+\beta)}\left|\Omega_{i,n}\right|\\
        &\lesssim n^{sp+\alpha+\beta-\underline{d}}\sum_{i=0}^{\infty}2^{-i(\alpha+\beta+\underline{d})}=C,
       \end{align*}
       where $C$ is a constant independent of $n$. That means that the sequence $\{fv_{n}\}_{n\in\mathbb{N}}$ is bounded in $W^{s,p;\,\alpha,\beta}(\Omega)$. Now, the proof follows \cite[Proof of Theorem 2, case II]{DK}: we use Banach--Alaoglu and Eberlein--\v{S}mulian theorems to conclude that there exists a subsequence $\{fv_{n_{k}}\}_{k\in\mathbb{N}}$ convergent to $\ind_{\Omega}$ in $W^{s,p;\,\alpha,\beta}(\Omega)$.  The reflexivity of $W^{s,p;\,\alpha,\beta}(\Omega)$ is essential here.
       \end{proof}
  \begin{proof}[Proof of Theorem~\ref{tw1}, case III] 
       We proceed analogously as in the unweighted case in \cite[Proof of Theorem 2, case III]{DK}. In this case we just need to use the fractional weighted Hardy inequality (\ref{eq:fhi}) in the case (F) and Fatou's lemma to prove that the function $f=\ind_{\Omega}$ cannot be approximated by $C_{c}^{\infty}(\Omega)$ functions in $W^{s,p;\,\alpha,\beta}(\Omega)$. 
       \end{proof}
       \begin{rem}
       Notice that in the proof of the case III we use the fact that if $u_{n}\longrightarrow\ind_{\Omega}$ in $L^{p}(\Omega)$, then there exists a subsequence $u_{n_{k}}$ convergent to $\ind_{\Omega}$ almost everywhere; the same fact holds if we replace $L^{p}(\Omega)$ by the weighted space $L^{p}(\Omega,W)$ for almost everywhere positive $W\in L^{1}(\Omega)$.
       \end{rem}
       \begin{proof}[Proof of Theorem~\ref{tw2}]
       If $\int_{\Omega}|f(x)|^{p}d_{\Omega}(x)^{-sp-\alpha-\beta}\,dx<\infty$, then $f\in W_{0}^{s,p;\,\alpha,\beta}(\Omega)$, because by Lemma \ref{ineq} we have
       \begin{align*}
       [fv_{n}]^{p}_{W^{s,p;\,\alpha,\beta}(\Omega)}&\lesssim n^{sp}\int_{\Omega_{\frac{3}{n}}}\frac{|f(x)|^{p}}{d_{\Omega}(x)^{\alpha+\beta}}\,dx+\int_{\Omega_{\frac{3}{n}}}\int_{\Omega_{\frac{3}{n}}}\frac{|f(x)-f(y)|^{p}}{|x-y|^{d+sp}}d_{\Omega}(x)^{-\alpha}d_{\Omega}(y)^{-\beta}\,dy\,dx\\
       &\lesssim \int_{\Omega_{\frac{3}{n}}}\frac{|f(x)|^{p}}{d_{\Omega}(x)^{sp+\alpha+\beta}}\,dx+\int_{\Omega_{\frac{3}{n}}}\int_{\Omega_{\frac{3}{n}}}\frac{|f(x)-f(y)|^{p}}{|x-y|^{d+sp}}d_{\Omega}(x)^{-\alpha}d_{\Omega}(y)^{-\beta}\,dy\,dx\rightarrow 0,
       \end{align*}
       when $n\longrightarrow\infty$. On the other side, if $f\in W_{0}^{s,p;\,\alpha,\beta}(\Omega)$, then by the fractional Hardy inequality \eqref{eq:fhi} and Fatou's lemma we obtain that $\int_{\Omega}|f(x)|^{p}d_{\Omega}(x)^{-sp-\alpha-\beta}\,dx<\infty$. That proves the desired characterization of $W_{0}^{s,p;\,\alpha,\beta}(\Omega)$.
       \end{proof}
        \begin{proof}[Proof of Theorem~\ref{tw.emb}]
       This is a straightforward consequence of the fractional Hardy inequality \eqref{eq:fhi} in the case (T'), case I of the Theorem \ref{tw1} and Fatou's lemma. We can easily see that uniform domains are $\kappa$-plump, so \eqref{Hardy} is applicable.
       \end{proof}

\end{document}